\newtheorem{theorem}{Theorem}[section]
\newtheorem{proposition}[theorem]{Proposition}
\newtheorem{problem}{Problem}
\newtheorem{corollary}[theorem]{Corollary}
\tikzstyle{vertex}=[circle, draw, inner sep=0pt, minimum size=7pt]
\title{Injective (edge) colorings of generalized Sierpi\'{n}ski graphs}
\author{ckbhanupriya@gmail.com; bostjan.bresar@um.si}
\author{C.~K. Bhanupriya$^a$ and Bo\v{s}tjan Bre\v{s}ar$^{b,c}$\footnote{Corresponding author}, \\ \\
$^a$ Department of Mathematics, Madanapalle Institute of Technology and Science, \\Deemed to be University, Andhra Pradesh, India \\
\\
$^b$ Faculty of Natural Sciences and Mathematics, University of Maribor, Slovenia \\
\\
$^c$ Institute of Mathematics, Physics and Mechanics, Ljubljana, Slovenia\\
\\
\small \tt Email: ckbhanupriya@gmail.com\\
\small \tt Email: bostjan.bresar@um.si \\
}
\date{}
\begin{document}

\maketitle

\begin{abstract}
Generalized Sierpi\'{n}ski graphs constitute a distinctive class of fractal-like networks with recursive definition: given a graph $G$, $S_G^1=G$ while $S_G^n$ is obtained from $|V(G)|$ copies of $S_G^{n-1}$ by adding some edges in a prescribed way that reflects the structure of $G$. Many graph invariants have been studied in generalized Sierpi\'{n}ski graphs, and in this paper we focus on their injective colorings, both the vertex and the edge version. Given a graph $G$, a mapping $f$ that assigns an integer from $\{1,\ldots,k\}$ to each vertex (resp.\ edge) of $G$ is an injective (edge) coloring of $G$ if $f(x)=f(y)$ implies that $x$ and $y$ are not in a common triangle nor at distance $2$ for any two vertices (resp.\ edges) $x$ and $y$ in $G$.
The minimum number of colors $k$ for which there exists an injective (edge) coloring of $G$ is called the injective chromatic number (resp.\ injective chromatic index) of $G$ and is denoted by $\chi_i(G)$ (resp.\ $\chi_i'(G)$). The vertex version of injective colorings in generalized Sierpi\'{n}ski graphs was studied in [Injective colorings of Sierpi\'nski-like graphs and Kneser graphs,
Graphs.\ Combin.\ 41 (2025) 83], where the authors determined the injective chromatic numbers of the (standard) Sierpi\'{n}ski graphs,
and asked about the values when $G$ is a cycle. We resolve this question by proving that $\chi_i(S_{C_k}^n)=3$ for every $n\ge 2$ and every $k\ge 3$. Moreover, we prove an almost conclusive result that $\chi_i(S_G^n)\in \{\chi_i(G),\chi_i(G)+1\}$  for any graph $G$ and any $n\ge 2$. Injective edge colorings appear to be more difficult, especially in graphs with (many) triangles. On a positive note, we prove that $\chi_i'(S_{K_3}^n)=5$ for all $n\ge 3$, while $\chi_i'(S_{K_3}^2)=4$ and $\chi_i'(S_{K_3}^1)=3$. Furthermore, if $G$ is a triangle-free graph, we prove that $\chi_i'(S_G^n)\in \{\chi_i'(S_G^3),\chi_i'(S_G^3)+1\}$ for all $n\ge 4$, and provide some sufficient conditions on an injective edge coloring of the 3-dimensional Sierpi\'{n}ski graph over $G$, which ensure that $\chi_i'(S_G^n)=\chi_i'(S_G^3)$. In particular, the latter result enables us to establish that $\chi_i'(S_{C_4}^n)=3$, $\chi_i'(S_{C_5}^n)=4$ and $\chi_i'(S_{C_6}^n)=3$ hold for all $n\ge 2$.     
\end{abstract}

\noindent\textbf{Keywords}: injective coloring, injective edge coloring, generalized Sierpi\'{n}ski graphs, cycle. 

\noindent\textbf{AMS Subj.\ Class.\ (2020)}: 05C15, 05C76    

\section{Introduction}

Given a graph $G$, a function $f:V(G)\rightarrow\{1,\dots,k\}$ is an {\em injective $k$-coloring} if
any two vertices $u$ and $w$ with $f(u)=f(w)$ have no common neighbor. 
In other words, $f$ restricted to the neighborhood of any vertex is injective, which justifies the word ``injective'' in the name.
The minimum $k$ for which a graph $G$ admits an injective $k$-coloring is the {\em injective chromatic number} of $G$, and is denoted by $\chi_{i}(G)$. An injective $k$-coloring of a graph $G$, where $k=\chi_i(G)$, is a {\em $\chi_i$-coloring of $G$}. Injective colorings were introduced by Hahn, Kratochv\' il, \v{S}ir\'{a}\v{n} and Sotteau~\cite{hkss}, and were studied by a number of authors from various aspects; see~\cite{BSY, ICGenMycielskain, bu-2009, cky-2010, deng-2021, dl, jin-2013, ls-2015, mo, pp, SSY, sy} for a short selection of papers on injective colorings.

Similar to the injective (vertex) coloring, an edge version of the concept was proposed in~\cite{cardoso2019iec}; see also~\cite{csilla} for a related earlier study. 
 An {\em injective edge $k$-coloring} of a graph $G$ is a coloring $c : E(G) \rightarrow \{1,\dots,k\}$, such that if $e_1=xy, e_2=yz$ and $e_3=zu$ are consecutive edges in $G$ (where $u=x$ is also allowed), then $c(e_1) \neq c(e_3)$. The {\em injective chromatic index} of a graph $G$, $\chi_i'(G)$, is the minimum $k$ such that $G$ admits an injective edge $k$-coloring. An injective edge $k$-coloring of $G$ with $k=\chi_i'(G)$ is a {\em $\chi_i'$-coloring of $G$}.
  The injective edge coloring was studied from variety of angles~\cite{cardoso2019iec, IECBCK, FerdjallahIEC2021, FoucaudIEC2021}. Not surprisingly, the decision version of injective chromatic number (resp.\ index) is an NP-complete problem; see~\cite{hkss} and~\cite{cardoso2019iec}, respectively. 

The family of Sierpi\'{n}ski graphs was introduced by Klav\v zar and Milutinovi\'{c} in~\cite{klavzar-1997} with motivation arising from the Switching Tower of Hanoi game. Sierpi\'nski graphs attained a lot of interest among researchers, which led in 2017 to an extensive survey paper~\cite{hinz-2017}; see also some recent papers addressing new aspects of study of this graph family~\cite{anitha-2025,balakrishnan-2022, menon-2023}. A comprehensive monograph~\cite{hanoi} presents 
the theoretical significance and diverse practical applications of the 
Tower of Hanoi Problem and related topics. 
In 2011, Gravier, Kov\v se, and Parreau~\cite{gravier-2011} introduced generalized Sierpi\'nski graphs, which are defined as follows. Given an arbitrary graph $G$ and a positive integer $n$, the {\em generalized Sierpi\'nski graph} $S_G^n$ is the graph with the vertex set $V(G)^n$, where two vertices $(u_1,\ldots, u_n)$ and $(v_1,\ldots, v_n)$ are adjacent if there exists $d\in [n]$ such that 
\begin{enumerate}[noitemsep]
    \item $u_i = v_i$ for $i<d$, 
    \item $u_dv_d\in E(G)$, and 
    \item $u_i = v_d$ and $v_i = u_d$ for $i>d$.
\end{enumerate}
(The notation $[n]=\{1,\ldots,n\}$ and $[n]_0=\{0,1,\ldots,n\}$  will be used throughout the paper.) 
We also use the expressions that $G$ is the {\em base graph} of $S_G^n$, and $S_G^n$ is the generalized Sierpi\'nski graph {\em over} $G$ of {\em dimension} $n$. 
See Figure~\ref{fig:S_C_4} depicting generalized Sierpi\'{n}ski graphs over the cycle $C_4$ for $n=1$, $n=2$ and $n=3$; note that $V(C_4)=[4]$ in the figure and brackets and commas are omitted. Intuitively, the graph $S_G^n$ is obtained from $|V(G)|$ copies of the graph $S_G^{n-1}$ by connecting the vertex $(u,\ldots, u)$ in the copy of $S_G^{n-1}$ that corresponds to $v\in V(G)$ (and this vertex is denoted by $(v,u,\ldots, u)$ in $S_G^n$) to the vertex $(v,\ldots, v)$ in the copy of $S_G^{n-1}$  that corresponds to $u\in V(G)$ (and this vertex is denoted by $(u,v,\ldots, v)$ in $S_G^n$) whenever $uv\in E(G)$.  It is easy to see that the maximum degree of $S_G^n$ is $\Delta + 1$, where $\Delta$ is  the maximum degree of $G$. Note that the (standard) Sierpi\'{n}ski graphs are those in which the base graph $G$ is a complete graph, that is, they are the graphs $S_{K_p}^n$, where $p\ge 3$ and $n$ are positive integers. Vertices $(u_1,\ldots, u_n)$, with $u_i=u$ for all $i\in [n]$, where $u\in V(G)$, are {\em extreme vertices} of $S_G^n$. Note that the distance between any two distinct extreme vertices in $S_G^n$ is at least $2^n-1$. This can be seen by combining the formula for distances in $S_{K_p}^n$ (see~\cite[Lemma 4]{klavzar-1997}) with the fact that $S_G^n$ is a spanning subgraph of $S_{K_p}^n$, where $p=|V(G)|$; see also~\cite{err}.
For any graph-theoretic notions not defined here we refer to West~\cite{we}.

\begin{figure}[ht!]
\begin{center}
\begin{tikzpicture}[scale=0.7,style=thick,x=1cm,y=1cm]
\def\vr{3pt}

\begin{scope}[xshift=-12cm, yshift=0cm] 
\coordinate(x1) at (0,0);
\coordinate(x2) at (0,1);
\coordinate(x3) at (1,1);
\coordinate(x4) at (1,0);
\draw (x1) -- (x2) -- (x3) -- (x4) -- (x1);

\foreach \i in {1,...,4}
{ 
\draw(x\i)[fill=white] circle(\vr);
}

\draw[below] (x1)++(0,-0.1) node {$1$};
\draw[above] (x2)++(0,0) node {$2$};
\draw[above] (x3)++(0,0) node {$3$};
\draw[below] (x4)++(0,-0.1) node {$4$};
\end{scope}

\begin{scope}[xshift=-7cm, yshift=0cm] 
\coordinate(x1) at (0,0);
\coordinate(x2) at (1,0);
\coordinate(x3) at (2,0);
\coordinate(x4) at (3,0);
\coordinate(x5) at (0,1);
\coordinate(x6) at (1,1);
\coordinate(x7) at (2,1);
\coordinate(x8) at (3,1);
\coordinate(x9) at (0,2);
\coordinate(x10) at (1,2);
\coordinate(x11) at (2,2);
\coordinate(x12) at (3,2);
\coordinate(x13) at (0,3);
\coordinate(x14) at (1,3);
\coordinate(x15) at (2,3);
\coordinate(x16) at (3,3);
\draw (x1) -- (x2) -- (x6) -- (x5) -- (x1);
\draw (x2) -- (x3) -- (x4) -- (x8) -- (x7) -- (x3);
\draw (x8) -- (x12) -- (x16) -- (x15) -- (x11) -- (x12);
\draw (x15) -- (x14) -- (x13) -- (x9) -- (x10) -- (x14);
\draw (x5) -- (x9);
\foreach \i in {1,...,16}
{ 
\draw(x\i)[fill=white] circle(\vr);
}

\draw[below] (x1)++(0,-0.1) node {\small $11$};
\draw[below] (x2)++(0,-0.1) node {\small $14$};
\draw[below] (x3)++(0,-0.1) node {\small $41$};
\draw[below] (x4)++(0,-0.1) node {\small $44$};
\draw[above] (x13)++(0,0.1) node {\small $22$};
\draw[above] (x14)++(0,0.1) node {\small $23$};
\draw[above] (x15)++(0,0.1) node {\small $32$}; 
\draw[above] (x16)++(0,0.1) node {\small $33$};
\draw[left] (x5)++(0,0) node {\small $12$};
\draw[left] (x9)++(0,0) node {\small $21$};
\draw[right] (x8)++(0,0) node {\small $43$};
\draw[right] (x12)++(0,0) node {\small $34$};
\draw[right] (x10)++(-0.05,0) node {\small $24$};
\draw[below] (x11)++(0,0.) node {\small $31$};
\draw[left] (x7)++(0.,0) node {\small $42$};
\draw[above] (x6)++(0,0) node {\small $13$};
\end{scope}

\begin{scope}[xshift=0cm, yshift=0cm] 
\coordinate(x1) at (0,0);
\coordinate(x2) at (1,0);
\coordinate(x3) at (2,0);
\coordinate(x4) at (3,0);
\coordinate(x5) at (0,1);
\coordinate(x6) at (1,1);
\coordinate(x7) at (2,1);
\coordinate(x8) at (3,1);
\coordinate(x9) at (0,2);
\coordinate(x10) at (1,2);
\coordinate(x11) at (2,2);
\coordinate(x12) at (3,2);
\coordinate(x13) at (0,3);
\coordinate(x14) at (1,3);
\coordinate(x15) at (2,3);
\coordinate(x16) at (3,3);
\draw (x1) -- (x2) -- (x6) -- (x5) -- (x1);
\draw (x2) -- (x3) -- (x4) -- (x8) -- (x7) -- (x3);
\draw (x8) -- (x12) -- (x16) -- (x15) -- (x11) -- (x12);
\draw (x15) -- (x14) -- (x13) -- (x9) -- (x10) -- (x14);
\draw (x5) -- (x9);
\draw (3,0) -- (4,0);
\draw (0,3) -- (0,4);
\draw (7,3) -- (7,4);
\draw (3,7) -- (4,7);
\foreach \i in {1,...,16}
{ 
\draw(x\i)[fill=white] circle(\vr);
}
\draw[below] (x1)++(0,0.06) node {\tiny $111$};
\draw[below] (x2)++(0,0.06) node {\tiny $114$};
\draw[below] (x3)++(0,0.06) node {\tiny $141$};
\draw[below] (x4)++(0,0.06) node {\tiny $144$};
\draw[left] (x5)++(0,0.0) node {\tiny $112$};
\draw[above] (x6)++(0,0) node {\tiny $113$};
\draw[above] (x7)++(0,-0.001) node {\tiny $142$};
\draw[above] (x8)++(-.08,0.0) node {\tiny $143$};
\draw[left] (x9)++(0,0.06) node {\tiny $121$};
\draw[below] (x10)++(0,0.06) node {\tiny $124$};
\draw[below] (x11)++(0,0.06) node {\tiny $131$};
\draw[below] (x12)++(-0.08,0.06) node {\tiny $134$};
\draw[left] (x13)++(0,0) node {\tiny $122$};
\draw[above] (x14)++(0,0) node {\tiny $123$};
\draw[above] (x15)++(0,0) node {\tiny $132$};
\draw[above] (x16)++(0,0.01) node {\tiny $133$};

\end{scope}

\begin{scope}[xshift=4cm, yshift=0cm] 
\coordinate(x1) at (0,0);
\coordinate(x2) at (1,0);
\coordinate(x3) at (2,0);
\coordinate(x4) at (3,0);
\coordinate(x5) at (0,1);
\coordinate(x6) at (1,1);
\coordinate(x7) at (2,1);
\coordinate(x8) at (3,1);
\coordinate(x9) at (0,2);
\coordinate(x10) at (1,2);
\coordinate(x11) at (2,2);
\coordinate(x12) at (3,2);
\coordinate(x13) at (0,3);
\coordinate(x14) at (1,3);
\coordinate(x15) at (2,3);
\coordinate(x16) at (3,3);
\draw (x1) -- (x2) -- (x6) -- (x5) -- (x1);
\draw (x2) -- (x3) -- (x4) -- (x8) -- (x7) -- (x3);
\draw (x8) -- (x12) -- (x16) -- (x15) -- (x11) -- (x12);
\draw (x15) -- (x14) -- (x13) -- (x9) -- (x10) -- (x14);
\draw (x5) -- (x9);
\foreach \i in {1,...,16}
{ 
\draw(x\i)[fill=white] circle(\vr);
}
\draw[below] (x1)++(0,0.06) node {\tiny $411$};
\draw[below] (x2)++(0,0.06) node {\tiny $414$};
\draw[below] (x3)++(0,0.06) node {\tiny $441$};
\draw[below] (x4)++(0,0.06) node {\tiny $444$};
\draw[above] (x5)++(-0.08,0.0) node {\tiny $412$};
\draw[above] (x6)++(0,0) node {\tiny $413$};
\draw[above] (x7)++(0,-0.001) node {\tiny $442$};
\draw[right] (x8)++(0,0.05) node {\tiny $443$};
\draw[below] (x9)++(-0.08,0.06) node {\tiny $421$};
\draw[below] (x10)++(0,0.06) node {\tiny $424$};
\draw[below] (x11)++(0,0.06) node {\tiny $431$};
\draw[right] (x12)++(0,0.06) node {\tiny $434$};
\draw[above] (x13)++(0,0) node {\tiny $422$};
\draw[above] (x14)++(0,0) node {\tiny $423$};
\draw[above] (x15)++(0,0) node {\tiny $432$};
\draw[right] (x16)++(0,0.01) node {\tiny $433$};

\end{scope}

\begin{scope}[xshift=0cm, yshift=4cm] 
\coordinate(x1) at (0,0);
\coordinate(x2) at (1,0);
\coordinate(x3) at (2,0);
\coordinate(x4) at (3,0);
\coordinate(x5) at (0,1);
\coordinate(x6) at (1,1);
\coordinate(x7) at (2,1);
\coordinate(x8) at (3,1);
\coordinate(x9) at (0,2);
\coordinate(x10) at (1,2);
\coordinate(x11) at (2,2);
\coordinate(x12) at (3,2);
\coordinate(x13) at (0,3);
\coordinate(x14) at (1,3);
\coordinate(x15) at (2,3);
\coordinate(x16) at (3,3);
\draw (x1) -- (x2) -- (x6) -- (x5) -- (x1);
\draw (x2) -- (x3) -- (x4) -- (x8) -- (x7) -- (x3);
\draw (x8) -- (x12) -- (x16) -- (x15) -- (x11) -- (x12);
\draw (x15) -- (x14) -- (x13) -- (x9) -- (x10) -- (x14);
\draw (x5) -- (x9);
\foreach \i in {1,...,16}
{ 
\draw(x\i)[fill=white] circle(\vr);
}
\draw[left] (x1)++(0,0.0005) node {\tiny $211$};
\draw[below] (x2)++(0,0.06) node {\tiny $214$};
\draw[below] (x3)++(0,0.06) node {\tiny $241$};
\draw[below] (x4)++(0,0.06) node {\tiny $244$};
\draw[left] (x5)++(0,0.0005) node {\tiny $212$};
\draw[above] (x6)++(0,0.0005) node {\tiny $213$};
\draw[above] (x7)++(0,0.0005) node {\tiny $242$};
\draw[above] (x8)++(-0.08,0.0005) node {\tiny $243$};
\draw[left] (x9)++(0,0.06) node {\tiny $221$};
\draw[below] (x10)++(0,0.06) node {\tiny $224$};
\draw[below] (x11)++(0,0.06) node {\tiny $231$};
\draw[below] (x12)++(-0.08,0.06) node {\tiny $234$};
\draw[above] (x13)++(0,0) node {\tiny $222$};
\draw[above] (x14)++(0,0) node {\tiny $223$};
\draw[above] (x15)++(0,0) node {\tiny $232$};
\draw[above] (x16)++(0,0) node {\tiny $233$};

\end{scope}

\begin{scope}[xshift=4cm, yshift=4cm] 
\coordinate(x1) at (0,0);
\coordinate(x2) at (1,0);
\coordinate(x3) at (2,0);
\coordinate(x4) at (3,0);
\coordinate(x5) at (0,1);
\coordinate(x6) at (1,1);
\coordinate(x7) at (2,1);
\coordinate(x8) at (3,1);
\coordinate(x9) at (0,2);
\coordinate(x10) at (1,2);
\coordinate(x11) at (2,2);
\coordinate(x12) at (3,2);
\coordinate(x13) at (0,3);
\coordinate(x14) at (1,3);
\coordinate(x15) at (2,3);
\coordinate(x16) at (3,3);
\draw (x1) -- (x2) -- (x6) -- (x5) -- (x1);
\draw (x2) -- (x3) -- (x4) -- (x8) -- (x7) -- (x3);
\draw (x8) -- (x12) -- (x16) -- (x15) -- (x11) -- (x12);
\draw (x15) -- (x14) -- (x13) -- (x9) -- (x10) -- (x14);
\draw (x5) -- (x9);
\foreach \i in {1,...,16}
{ 
\draw(x\i)[fill=white] circle(\vr);
}

\draw[below] (x1)++(0,0.0005) node {\tiny $311$};
\draw[below] (x2)++(0,0.06) node {\tiny $314$};
\draw[below] (x3)++(0,0.06) node {\tiny $341$};
\draw[right] (x4)++(0,0.06) node {\tiny $344$};
\draw[above] (x5)++(-0.08,0.0005) node {\tiny $312$};
\draw[above] (x6)++(0,0.0005) node {\tiny $313$};
\draw[above] (x7)++(0,0.0005) node {\tiny $342$};
\draw[right] (x8)++(0,0.0005) node {\tiny $343$};
\draw[below] (x9)++(-0.08,0.06) node {\tiny $321$};
\draw[below] (x10)++(0,0.06) node {\tiny $324$};
\draw[below] (x11)++(0,0.06) node {\tiny $331$};
\draw[right] (x12)++(0,0.06) node {\tiny $334$};
\draw[above] (x13)++(0,0) node {\tiny $322$};
\draw[above] (x14)++(0,0) node {\tiny $323$};
\draw[above] (x15)++(0,0) node {\tiny $332$};
\draw[above] (x16)++(0,0) node {\tiny $333$};

\end{scope}

\end{tikzpicture}
\caption{$S_{C_4}^1$ (left), $S_{C_4}^2$ (middle) and  $S_{C_4}^3$ (right).}
\label{fig:S_C_4}
\end{center}
\end{figure}

Many graph invariants have been investigated in generalized Sierpi\'{n}ski graphs; see a selection of references~\cite{bf, er, erv, gs, kz, korze-2019, pjk, rre}. In a recent paper~\cite{bksy}, the authors considered injective colorings of (generalized) Sierpi\'{n}ski graphs. Their main focus was on the standard Sierpi\' nski graphs, that is, when the base graph is a complete graph.  They proved that $\chi_i(S_{K_p}^n)=p$ holds for any $p\ge 3$ and any $n$. In addition, for $n\ge 2$ they established that $\chi_i(S_{C_4}^n) = 3$, and suggested a thorough investigation of the injective chromatic number of generalized Sierpi\'nski graphs. In particular, concerning the generalized Sierpi\'{n}ski graphs over cycles, they suspected that $\chi_i(S_{C_k}^n) = 3$ holds for any $n\ge 2$ and $k\ge 5$.  

In this paper, we follow the suggestion from~\cite{bksy} and investigate the injective chromatic numbers in  Sierpi\'{n}ski graphs. We confirm the suspicion of the authors in~\cite{bksy} about the value of the injective chromatic number of generalized Sierpi\'{n}ski graphs over cycles. The equality $\chi_i(S_{C_k}^n) = 3$, where $n\ge 2$ and $k\ge 3$, is obtained by using a general result that we prove on the injective chromatic number of the generalized Sierpi\'{n}ski graph $S_G^n$, which limits its value to only two possibilities  depending on $\chi_i(S_G^2)$. Then, we improve the mentioned general result by showing that $$\chi_i(G)\le\chi_i(S_G^n)\le \chi_i(G)+1$$
holds for any graph $G$ and any positive integer $n$, which remarkably restricts the options for the value of $\chi_i(S_G^n)$ to only two possibilities. The described study of injective (vertex) colorings of generalized Sierpi\'{n}ski graphs is given in Section~\ref{sec:inj}.

Section~\ref{sec:injedge} is devoted to injective edge colorings of generalized Sierpi\'{n}ski graphs. It turns out that the edge version of injective coloring is more difficult at least in generalized Sierpi\'{n}ski graphs. We determine the exact values of the injective chromatic indices for the standard Sierpi\'{n}ski graphs $S_3^n$ (that is, the generalized Sierpi\'{n}ski graphs whose base graph is $K_3$). In the proof that $\chi_i'(S_{K_3}^n)=5$ when $n\ge 3$ we use two types of injective edge colorings, which need to be combined in an appropriate way so that both types of colorings are obtained for all $n$. It seems that (many) triangles in a graph make difficulties in determining the injective chromatic index, hence in the last main result we consider triangle-free graphs. If $G$ is triangle-free then we obtain a similar bound for $S_G^n$ as in the case of injective (vertex) colorings, only this time the bound is based on the value of the $3$-dimensional Sierpi\'{n}ski graph over $G$. Notable, we prove that $$\chi_i'(S_G^3)\le \chi_i'(S_G^n)\le \chi_i'(S_G^3)+1$$ for any $n\ge 3$ and any triangle-free graph $G$, and add sufficient conditions for a $\chi_i'$-coloring of $S_G^3$ which enforce that $\chi_i'(S_G^n)=\chi_i'(S_G^3)$ holds for all $n$. The latter conditions enable us to prove the formulas $\chi_i'(S_{C_4}^n)=3$, $\chi_i'(S_{C_5}^n)=4$ and $\chi_i'(S_{C_6}^n)=3$, which hold for all $n\ge 2$.  

\section{Injective coloring}
\label{sec:inj}

We start with an upper bound on the injective chromatic number of $S_G^n$ using  
$\chi_i(S_G^2)$. 

\begin{theorem}
For any graph $G$ and any positive integer $n\ge 2$,
	$$\chi_i(S_G^2)\le \chi_i(S_G^n)\le \chi_i(S_G^2)+1.$$
\vskip -0.2cm
\label{thm:SG2}
\end{theorem}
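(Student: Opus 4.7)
The plan is to prove two bounds separately: a simple induced-subgraph lower bound and an inductive upper bound with a strengthened hypothesis.

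For the lower bound $\chi_i(S_G^2) \le \chi_i(S_G^n)$, I would fix any $w \in V(G)^{n-2}$ and observe that the set $\{(w_1,\ldots,w_{n-2},a,b) : a,b \in V(G)\}$ induces in $S_G^n$ a copy of $S_G^2$. The key check is that any edge between two such vertices must arise from $d \in \{n-1,n\}$ in the adjacency rule, since for $d \le n-2$ the two endpoints both carry coordinate $w_d$ in position $d$, which forbids $u_d v_d \in E(G)$. Then the bound follows from monotonicity of the injective chromatic number under induced subgraphs.

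For the upper bound, set $k = \chi_i(S_G^2)$. The plan is to prove by induction on $n \ge 2$ the strengthened claim: $S_G^n$ admits an injective $(k+1)$-coloring $g_n$ that assigns the single color $k+1$ to every extreme vertex $(v,\ldots,v)$, $v \in V(G)$. For the base case $n=2$, I would start with any $\chi_i$-coloring of $S_G^2$ and recolor its $|V(G)|$ extreme vertices to $k+1$; injectivity survives because distinct extreme vertices $(u,u),(v,v)$ of $S_G^2$ have disjoint neighborhoods $\{(u,c): c \in N_G(u)\}$ and $\{(v,c): c \in N_G(v)\}$.

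For the inductive step, I would exploit the decomposition of $S_G^n$ into cells $vS_G^{n-1}$ joined by cross-edges $(v,u,\ldots,u) \sim (u,v,\ldots,v)$ for $uv \in E(G)$. Given $g_{n-1}$ satisfying the induction hypothesis, define $g_n(v,a_2,\ldots,a_n) = g_{n-1}(a_2,\ldots,a_n)$, simply copying the coloring into each cell. Within-cell injectivity is immediate. By hypothesis, the ``cell-extreme'' vertices $(v,u,u,\ldots,u)$ (for all $v,u \in V(G)$) are exactly the vertices colored $k+1$, and these are the only ones incident to a cross-edge leaving their cell, so any same-color conflict across cells must involve two cell-extreme vertices both colored $k+1$.

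The hard part will be verifying that no two distinct cell-extreme vertices $A = (v_1,u_1,\ldots,u_1)$ and $B = (v_2,u_2,\ldots,u_2)$ share a neighbor in $S_G^n$. When $v_1 = v_2$, the two project to distinct extreme vertices of a single $S_G^{n-1}$, which share no neighbor there by induction; outside the cell, $A$'s unique external neighbor lies in $u_1S_G^{n-1}$ and $B$'s in $u_2S_G^{n-1}$, distinct when $u_1 \ne u_2$. When $v_1 \ne v_2$, I would rule out a common neighbor in each of three possible locations: in $v_1S_G^{n-1}$ a neighbor of $A$ has the form $(v_1,u_1,\ldots,u_1,c)$ with $c \in N_G(u_1) \setminus \{u_1\}$, so it is not cell-extreme and thus has no cross-edge to $B$'s cell; the case of $v_2S_G^{n-1}$ is symmetric; a common neighbor in a third cell $v_3S_G^{n-1}$ would force $v_3 = u_1 = u_2$ from the cross-edge structure, yet then the two cross-edge targets $(u_1,v_1,\ldots,v_1)$ and $(u_1,v_2,\ldots,v_2)$ are distinct, a contradiction. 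Finally, the extreme vertices of $S_G^n$, being the cell-extreme vertices of the form $(v,v,\ldots,v)$, receive color $k+1$ by the induction hypothesis, closing the induction.
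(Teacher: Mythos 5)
Your lower bound is fine, and your coloring is in fact identical to the paper's: unrolling the recursion $g_n(v,a_2,\ldots,a_n)=g_{n-1}(a_2,\ldots,a_n)$ gives $g_n(x)=g_2(x_{n-1},x_n)$, i.e.\ color $k+1$ exactly when $x_{n-1}=x_n$ and the original $S_G^2$-color of $(x_{n-1},x_n)$ otherwise, which is precisely the coloring the paper checks directly against the partition of $S_G^n$ into $|V(G)|^{n-2}$ copies of $S_G^2$. The problem is in your inductive verification. The pivotal sentence is that ``the cell-extreme vertices $(v,u,\ldots,u)$ are exactly the vertices colored $k+1$.'' This is not implied by your inductive hypothesis (which only asserts that extreme vertices receive $k+1$), and it is in fact false for $n\ge 4$: in $S_G^4$ your coloring assigns $k+1$ to every vertex of the form $(a,b,u,u)$, including those with $b\ne u$, which are not cell-extreme. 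If you instead strengthen the hypothesis to ``exactly the extreme vertices get $k+1$,'' it fails to propagate, since $g_3$ already gives color $k+1$ to the non-extreme vertices $(v,u,u)$ with $v\ne u$.

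Consequently the reduction of cross-cell conflicts to pairs of cell-extreme vertices is unjustified, and one case is left open: a cell-extreme vertex $x=(v_1,u,\ldots,u)$ colored $k+1$ whose cross-edge endpoint $z=(u,v_1,\ldots,v_1)$ has within-cell neighbors $y=(u,v_1,\ldots,v_1,c)$ with $c\in N_G(v_1)$; you must rule out $g_n(y)=k+1$, and nothing in your stated hypothesis does so. The case is in fact harmless, because $y_{n-1}=v_1\ne c=y_n$ forces $g_n(y)\le k$, but seeing this requires knowing more about where color $k+1$ lives than ``on the extreme vertices.'' The clean repair is to carry the stronger invariant ``every extreme vertex receives $k+1$, and no neighbor of an extreme vertex receives $k+1$'': it holds in the base case and propagates, since the neighbors of $(v,\ldots,v)$ in $S_G^n$ project onto neighbors of the extreme vertex $(v,\ldots,v)$ of $S_G^{n-1}$. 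This added condition is essentially the one the paper isolates separately as the hypothesis of Theorem~\ref{thm:SG2suff}. The remainder of your argument --- the induced-subgraph lower bound, the base case, and the case analysis for two cell-extreme vertices --- is correct.
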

\begin{proof}
Clearly, $\chi_i(S_G^n)\geq \chi_i(S_G^2)$, since $S_G^2$ is a subgraph of $S_G^n$. Let $f:V(S_G^2)\rightarrow [k]$ be a $\chi_i$-coloring of $S_G^2$. Then, for $n\ge 3$, let $f_1:V(S_G^n)\rightarrow [k+1]$ be defined as follows:
	\begin{itemize}[noitemsep]
		\item $f_1(u)=f(u_{n-1},u_{n}) \text{ if } u_{n-1}\neq u_n$,
		\item $f_1(u)=k+1 \text{ if }  u_{n-1}= u_n$,
	\end{itemize}
where the notation $u=(u_1,\ldots,u_n)$ is used.

Considering the natural partition of $S_G^n$ into $|V(G)|^{n-2}$ copies of $S_G^2$ note that all neighbors of a vertex $w\in V(S_G^n)$ with $w_{n-1}\ne w_{n}$ are in the copy of $S_G^2$ to which $w$ belongs. Hence, the colors given by $f_1$ to vertices in $N_{S_G^n}(w)$ are pairwise distinct, since $f$ is an injective coloring and there is at most one vertex $u$ in $N_{S_G^n}(w)$ with $u_{n-1}=u_n$ (which is given color $k+1$ by $f_1$). On the other hand, if $w_{n-1}= w_n$ for a vertex $w\in V(S_G^n)$, then it has exactly one neighbor in another copy of $S_G^2$, while all other vertices are in the same copy of $S_G^2$ as $w$. We again infer that the colors given by $f_1$ to vertices in $N_{S_G^n}(w)$ are pairwise distinct. Thus, $f_1$ is an injective coloring of $S_G^n$ with $\chi_i(S_G^2)+1$ colors, therefore $\chi_i(S_G^n)\le \chi_i(S_G^2)+1$.
\end{proof}

The following theorem provides a useful sufficient condition for the equality $\chi_i(S_G^n)= \chi_i(S_G^2)$. 

\begin{theorem}
\label{thm:SG2suff}
If there exists a $\chi_i$-coloring $f$ of $S_G^2$ such that all extreme vertices of $S_G^2$ receive the same color and for each extreme vertex $(u,u)$, $u\in V(G)$, the color $f(u,u)$ does not appear in any of its neighbors,  then $\chi_i(S_G^n)= \chi_i(S_G^2)$ for all $n\geq 2$.
\end{theorem}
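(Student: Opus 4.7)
Plan: Since Theorem~\ref{thm:SG2} already supplies the lower bound $\chi_i(S_G^n)\ge \chi_i(S_G^2)$, it suffices to construct an injective coloring of $S_G^n$ with $k=\chi_i(S_G^2)$ colors. I would prove by induction on $n\ge 2$ the stronger statement that there exists such an injective $k$-coloring $f_n$ satisfying the two properties assumed in the hypothesis: all extreme vertices of $S_G^n$ receive a common color $c$, and $c$ does not appear at any neighbor of any extreme vertex. The base case $n=2$ is exactly the hypothesis.

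For the inductive step, I would use the standard decomposition of $S_G^n$ into $|V(G)|$ copies of $S_G^{n-1}$ indexed by the first coordinate and joined by the edges $(v,u,\ldots,u)$--$(u,v,\ldots,v)$ for $uv\in E(G)$, and define
\[
f_n(v,w_2,\ldots,w_n) = f_{n-1}(w_2,\ldots,w_n).
\]
A short check of the adjacency rule shows that the extreme vertex $(v,\ldots,v)$ of $S_G^n$ has, as its only neighbors, the vertices $(v,\ldots,v,w)$ for $w\in N_G(v)$, all of which lie inside copy $v$. Therefore both the property ``$(v,\ldots,v)$ has color $c$'' and the property ``$c$ does not appear at any of its neighbors'' are inherited directly from the corresponding properties of $f_{n-1}$ on $S_G^{n-1}$.

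It remains to verify that $f_n$ is injective. I would take distinct $x,y$ with $f_n(x)=f_n(y)$ and argue by contradiction that they cannot share a common neighbor $z$. If $x$ and $y$ belong to the same copy $v$, then the injectivity of $f_{n-1}$ rules out a common neighbor inside the copy, while a common neighbor outside the copy would require both $x$ and $y$ to be extreme-like vertices $(v,u,\ldots,u)$ and $(v,u',\ldots,u')$, whose respective unique outside-copy neighbors $(u,v,\ldots,v)$ and $(u',v,\ldots,v)$ would have to coincide, forcing $x=y$. If instead $x$ and $y$ lie in distinct copies $v$ and $v'$, a common neighbor in a third copy is excluded by the same uniqueness observation, while a common neighbor $z$ in copy $v$ forces $y=(v',v,\ldots,v)$ with $f_n(y)=c$ and forces $x$ to be a within-copy neighbor of the extreme vertex $(v,v',\ldots,v')$ of copy $v$; the inductive ``$c$ avoids neighbors of extreme vertices'' property then yields $f_n(x)\ne c$, contradicting $f_n(x)=f_n(y)$. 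The case of $z$ in copy $v'$ is symmetric.

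The main obstacle, and the only step that genuinely uses the extra hypothesis on $f$, is precisely the last configuration: without the property that $c$ avoids neighbors of extreme vertices, an extreme-like vertex in one copy (automatically colored $c$) could share a common neighbor with a $c$-colored extreme-like vertex in an adjacent copy, and $f_n$ would fail to be injective. Bundling the extreme-vertex condition together with the injective property into the inductive hypothesis is thus the natural way to close the induction.
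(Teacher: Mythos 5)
Your proof is correct, and the coloring you construct is in fact the same one the paper uses: unrolling your recursion gives $f_n(w_1,\ldots,w_n)=f(w_{n-1},w_n)$, i.e.\ the color depends only on the last two coordinates. What differs is the decomposition and the verification. The paper partitions $S_G^n$ directly into $|V(G)|^{n-2}$ copies of $S_G^2$ and checks, vertex by vertex, that the colors on each neighborhood are pairwise distinct (splitting on whether $w_{n-1}=w_n$, in which case $w$ is an extreme vertex of its $S_G^2$-copy with exactly one outside neighbor, necessarily colored $c$, while the hypothesis guarantees $c$ is absent from the inside neighbors). You instead decompose $S_G^n$ into $|V(G)|$ copies of $S_G^{n-1}$ and run an induction whose hypothesis bundles injectivity with the two extreme-vertex properties, then rule out common neighbors by a case analysis on the locations of $x$, $y$ and $z$. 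Your route is slightly longer but has the virtue of isolating exactly where the extra hypothesis is needed (the configuration of a connecting edge meeting a neighbor of an extreme vertex of a copy) and of showing that the hypothesis of the theorem is self-propagating, which is essentially the observation the paper exploits implicitly when it notes that the unique outside neighbor of an extreme vertex of a copy again receives color $c$. Both arguments rely on the same two structural facts: a vertex of a copy has at most one neighbor outside its copy, and extreme vertices of $S_G^n$ have all their neighbors inside the deepest copy containing them.
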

\begin{proof}
The proof is based on a similar idea as the proof of Theorem~\ref{thm:SG2}, also considering the partition of $S_G^n$ into $|V(G)|^{n-2}$ copies of $S_G^2$. Notably, assuming that $f:V(S_G^2)\rightarrow [\chi_i(S_G^2)]$ is an injective coloring with the property from the statement of the theorem, we define the coloring $f_1:V(S_G^n)\rightarrow [\chi_i(S_G^2)]$ by letting $$f_1(w)=f(w_{n-1},w_n).$$ 
For vertices $w\in V(S_G^n)$ with $w_{n-1}\ne w_{n}$ we derive that vertices in $N_{S_G^n}(w)$ are given pairwise distinct colors by $f_1$, since their colors coincide with the corresponding colors in $S_G^2$ given by $f$. Now, assume that $w\in V(S_G^n)$ has $w_{n-1}=w_{n}$. By definition of $f_1$ and the assumption concerning coloring $f$, the color $f(w,w)$ does not appear in any of the neighbors of $w$ in the copy of $S_G^2$ in which $w$ lies, yet the neighbor of $w$ in another copy of $S_G^2$ in $S_G^n$ is given the color $f(w,w)$ by $f_1$. Therefore, $f_1$ is an injective coloring of $S_G^n$ and the stated result readily follows. 
\end{proof}	

By using Theorem~\ref{thm:SG2suff}, we now prove the announced result about Sierpi\'nski graphs whose base graph is a cycle. 

\begin{proposition}
For any $n\ge 2$ and any $k\ge 3$ we have $\chi_i(S_{C_k}^n) = 3$.
\end{proposition}
\begin{proof}
The cases when $k\in\{3,4\}$ were established in~\cite{bksy}, thus assume $k\ge 5$, and let $V(C_k)=[k-1]_0=\{0,1,\ldots, k-1\}$. Since $\Delta(S_{C_k}^n)=3$ for all $n\ge 2$ and $k\ge 3$, we immediately infer that $\chi_i(S_{C_k}^n)\ge 3$. Thus, to prove that $\chi_i(S_{C_k}^n) = 3$ it suffices to find an injective coloring of $S_{C_k}^2$ with three colors that satisfies the conditions in Theorem~\ref{thm:SG2suff}.

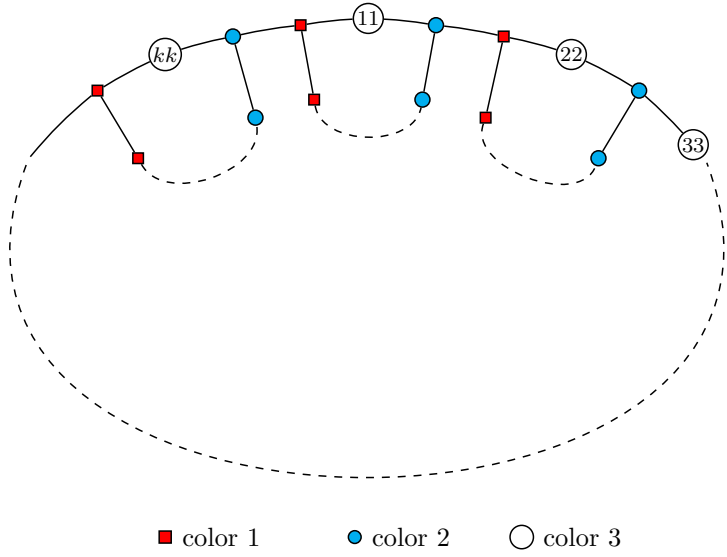
\begin{figure}[H]
    \centering
    \begin{tikzpicture}[scale=0.6,inner sep=2pt, line width=0.2mm]
            \draw (0,7.5) node(1) [circle,draw,fill=white,inner sep=0.5pt] {{\fontsize{8pt}{0pt}\selectfont{$11$}}}
                  (-1.5,7.35) node(2) [rectangle,draw,fill=red] {}
                  (1.5,7.35) node(3) [circle,draw,fill=cyan] {}
                  (-1.2,5.7) node(12) [rectangle,draw,fill=red] {}
                  (1.2,5.7) node(13) [circle,draw,fill=cyan] {};    
            \draw [-,bend right=3] (1) to (2);
            \draw [-,bend right=3] (3) to (1);

            \draw (-3,7.1) node(4) [circle,draw,fill=cyan] {}
                  (3,7.1) node(5) [rectangle,draw,fill=red] {}
                  (-2.5,5.3) node(11) [circle,draw,fill=cyan] {}
                  (-5.1,4.4) node(10) [rectangle,draw,fill=red] {}
                  (-4.5,6.7) node(6) [circle,draw,fill=white,inner sep=0.5pt] {{\fontsize{8pt}{0pt}\selectfont{$kk$}}}
                  (4.5,6.7) node(7) [circle,draw,fill=white,inner sep=0.5pt] {{\fontsize{8pt}{0pt}\selectfont{$22$}}}
                  (-6,5.9) node(8) [rectangle,draw,fill=red] {}
                  (6,5.9) node(9) [circle,draw,fill=cyan] {}
                  (2.6,5.3) node(14) [rectangle,draw,fill=red] {}
                  (5.1,4.4) node(15) [circle,draw,fill=cyan] {}
                  (-7.6,4.3) node(16){}
                  (7.2,4.7) node(17) [circle,draw,fill=white,inner sep=0.5pt] {{\fontsize{8pt}{0pt}\selectfont{$33$}}};
             \draw [-,bend right=80,dashed] (10) to (11); 
             \draw [-,bend right=80,dashed] (12) to (13);
             \draw [-,bend right=90,dashed] (14) to (15);
             \draw [-,bend left=3] (8) to (6);
             \draw [-,bend left=3] (3) to (5);
             \draw [-,bend left=3] (6) to (4);
             \draw [-,bend left=3] (5) to (7);
             \draw [-,bend right=3] (9) to (7);
             \draw [-,bend left=3] (4) to (2);
             \draw [-,bend right=5] (8) to (16);
             \draw [-,bend left=3] (9) to (17);
             \draw[-,dashed] (-7.58,4.25) .. controls (-11,-5) and (11,-5) .. (7.5,4.3);
             \draw[] (8) to (10)
                     (4) to (11)
                     (2) to (12)
                     (3) to (13)
                     (5) to (14)
                     (9) to (15);
        \draw (-4.5,-4) node() [inner sep=0.8mm,draw,rectangle, fill=red]{};
	\draw (-3.4,-4) node[] {\, color 1};
	\draw (-0.3,-4) node() [inner sep=0.6mm,draw,circle,fill=cyan]{};
	\draw (0.8,-4) node[] {\, color 2};
	\draw (3.4,-4) node() [inner sep=1.1mm,draw, circle, fill=white]{};
	\draw (4.6,-4) node[] {\, color 3};                      
\end{tikzpicture}
    \caption{A sketch of the injective coloring of $S_{C_k}^2$.}
    \label{General graph of S{C_k}2}
\end{figure}
The desired coloring $f:V(S_{C_k}^2)\to [3]$ is obtained by letting $f(i,i)=3$ for all $i\in [k-1]_0$, $f(i,i+1)=f(i,i+2)=2$ and $f(i,i-1)=f(i,i-2)=1$ for all $i\in [k-1]_0$ where all values are taken with respect to modulo $k$. (See Figure~\ref{General graph of S{C_k}2}.) If $k=5$, the coloring is already determined. Otherwise, for $k\ge 6$, we alternate the color pattern $2211$ along the cycle and, if necessary, complete it with either one or two vertices given color $3$.  More precisely, the following colors are applied between vertices $(i,i-3)$ and $(i,i+3)$:

\begin{itemize}[noitemsep]     
    \item $k\equiv 0 \mod 4$:
          \begin{itemize}[noitemsep]
              \item Use the repeating pattern $22112211 \cdots$ 
              \item Assign colors $3, 3, 2$ to the final three vertices in the sequence.
          \end{itemize}
    \item $k\equiv 1 \mod 4$:
          \begin{itemize}
              \item Use the repeating pattern $22112211 \cdots$ until all vertices are colored.
          \end{itemize}          
    \item $k\equiv 2 \mod 4$:
          \begin{itemize}[noitemsep]
              \item Use the repeating pattern $22112211 \cdots$ 
              \item Assign color $3$ to the final vertex.
          \end{itemize}  
    \item $k\equiv 3 \mod 4$:
          \begin{itemize}[noitemsep]
              \item Use the repeating pattern $22112211 \cdots$ 
              \item Assign colors $3, 3$ to the final two vertices.
          \end{itemize}  
\end{itemize}
Note that the above patterns provide an injective coloring of $S_{C_k}^2$ for all values of $k\ge 6$ by using three colors. The coloring also satisfies the conditions from Theorem~\ref{thm:SG2suff}, notably, it is a $\chi_i$-coloring of $S_{C_k}^2$ in which all extreme vertices receive the same color (namely, color $3$) and for each extreme vertex $(u,u)$, $u\in V(G)$, the color $f(u,u)$ does not appear in any of its neighbors. Thus, $\chi_i(S_{C_k}^n)=3$, as claimed.
\end{proof}

Using Theorem~\ref{thm:SG2} we infer our main result, which bounds $\chi_i(S_G^n)$ with respect to $\chi_i(G)$.

\begin{theorem}
\label{thm:main}
For any graph $G$ and positive integer $n$, $$\chi_i(G)\le\chi_i(S_G^n)\le \chi_i(G)+1,$$ and the bounds are sharp in the sense that there are  infinite families of graphs $G$ such that $\chi_i(S_G^n)=\chi_i(G)$ for all $n\in \mathbb{N}$, and $\chi_i(S_G^n)=\chi_i(G)+1$ for all $n\ge 2$, respectively. 
\end{theorem}
\begin{proof}
The inequality $\chi_i(G)\le\chi_i(S_G^n)$ is trivial. 
For the other inequality, let us first prove that $\chi_i(S_G^2)\le \chi_i(G)+1$. To see this, let $f:V(G)\rightarrow [k]$ be a $\chi_i$-coloring of $G$, and define a coloring $f_1:V(S_G^2)\rightarrow [k+1]$ as follows:

\begin{align*}
f_1(u_1,u_2)=
\begin{cases}
f(u_2) & \mbox{if } (u_1,u_2) \mbox{ is not an extreme vertex of } S_G^2,\\
k+1 & \mbox{if}\ u_1=u_2.
\end{cases}
\end{align*}
To verify that $f_1$ is an injective coloring of $S_G^2$ consider the natural partition of the graph into $|V(G)|$ induced subgraphs isomorphic to $G$. Note that a vertex $(u_1,u_2)$ has a neighbor in another subgraph with respect to this partition if and only if $u_1u_2\in E(G)$. Thus, if $u_1u_2\notin E(G)$, then the colors of vertices assigned by $f_1$ in $N_{S_G^2}(u_1,u_2)$ are pairwise distinct, since they are the same as the colors assigned by $f$ to the vertices in $N_G(u_2)$. On the other hand, if $u_1u_2\in E(G)$, then $(u_1,u_2)$ is adjacent also to $(u_2,u_1)$, which belongs to another subgraph of the mentioned partition. As in the previous case we note that no two neighbors of $(u_1,u_2)$ that lie in the same copy of $G$ in $S_G^2$ can receive the same color by $f_1$. Suppose that $f_1(u_2,u_1)=f_1(u_1,u_3)$, where $(u_1,u_3)$ is a neighbor of $(u_1,u_2)$. Since $f_1(u_2,u_1)\ne k+1$, this is only possible if $u_3\ne u_1$. However, $(u_1,u_3)(u_1,u_2)\in E(S_G^2)$ implies $u_2u_3\in E(G)$, which means that $u_1$ and $u_3$ have $u_2$ as a common neighbor in $G$. On the other hand, the former equality means $f(u_1)=f(u_3)$, which is a contradiction to  $f$ being an injective coloring. Thus, $f_1$ is indeed an injective coloring of $S_G^2$, and so $\chi_i(S_G^2)\le \chi_i(G)+1$. 

To see that $\chi_i(S_G^n)\le \chi_i(G)+1$ for any $n\ge 3$, consider again the coloring $f_1:V(S_G^2)\rightarrow [k+1]$ as defined in the previous paragraph.
Note that $f_1(u)=k+1$ if and only if $u$ is an extreme vertex of $S_G^2$. Since extreme vertices are distance at least $3$ in $S_G^2$, we infer that $f_1$ is an injective coloring of $S_G^2$ such that all extreme vertices receive the same color and for each vertex $(u,u)$, $u\in V(G)$, the color $f_1(u,u)$ does not appear in any of its neighbors. If $\chi_i(S_G^2)=k+1$, then by using Theorem~\ref{thm:SG2suff} we immediately infer that $\chi_i(S_G^n)= \chi_i(S_G^2)$ for all $n\geq 2$. Thus, $\chi_i(S_G^n)=k+1=\chi_i(G)+1$. On the other hand, if $\chi_i(S_G^2)<k+1$, we infer $\chi_i(S_G^2)=\chi_i(G)=k$. Now, Theorem~\ref{thm:SG2} gives $\chi_i(S_G^n)\le \chi_i(S_G^2)+1=k+1$ for all $n\ge 2$, and the proof is complete. 

For the sharpness of the bound, first note that there exist graphs $G$ with $\chi_i(S_G^n) = \chi_i(G)$ for all positive integers $n$. Consider complete graphs $K_p$ and recall from~\cite{bksy} that $\chi_i(S_{K_p}^n) = p = \chi_i(K_p)$ for all $p\ge 3$.  Similarly, for a cycle $C_k$ with $k\not\equiv 0 \pmod 4$ we have  $\chi_i(S_{C_k}^n) = 3 = \chi_i(C_k)$.
On the other hand, recall from~\cite{hkss} that $\chi_i(C_k)=2$ holds if $k\equiv 0 \pmod 4$. Therefore, $\chi_i(S_{C_k}^n) = 3=\chi_i(G)+1$ in this case.
\end{proof}

While Theorem~\ref{thm:main} is conclusive in the sense that there are only two possible values of $\chi_i(S_G^n)$ for all positive integers $n$, namely $\chi_i(G)$ and $\chi_i(G)+1$, it would also be interesting to characterize the graphs $G$ in both of these families. We pose this as a problem.

\begin{problem}
For which graphs $G$ is $\chi_i(S_G^n)=\chi_i(G)$ for all positive integers $n$?
\end{problem}

The problem is likely hard, but even some partial answers restricted to some well known classes of graphs might be interesting. 

\section{Injective edge coloring}
\label{sec:injedge}

Let us first introduce the notion of a common edge that will be used in what follows. Considering three consecutive edges $e_1=xy,e_2=yz$ and $e_3=zu$, where $u=x$ is not excluded, the edge $e_2$ is a {\em common edge} of $e_1$ and $e_3$. 
Thus, a coloring $c:E(G)\to [k]$ is an injective edge coloring of $G$ if $c(e)=c(f)$ implies that edges $e$ and $f$ have no common edge. 



We start with the injective edge colorings of the generalized Sierpi\'nski graphs over the complete graph $K_3$, that is, the standard Sierpi\'{n}ski graphs $S_3^n$. It turns out that determining the injective chromatic index of $S_3^n$ is more challenging than it was for its vertex counterpart. 
\begin{theorem}
$\chi_i'(S_3^n)= \begin{cases}
	           3, ~~\textrm{if}~ n=1,\\
	           4, ~~\textrm{if}~ n=2,\\
	           5, ~ ~\textrm{if}~ n\geq 3. 
	          \end{cases}$
\end{theorem}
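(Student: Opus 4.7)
I handle the three regimes separately. For $n=1$, since $S_3^1=K_3$ and its three edges form a triangle, each pair of edges together with the third edge is a walk of length three, so all three edges must receive distinct colors: $\chi_i'(K_3)=3$.

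For $n=2$, the lower bound $\chi_i'(S_3^2)\ge 4$ is obtained by assuming a $3$-injective-edge coloring and deriving a contradiction. Denote by $T_1,T_2,T_3$ the three base triangles and by $B_{12},B_{13},B_{23}$ the three bridges. The key facts are: (a) each $T_i$'s three edges pairwise share a common edge, so each $T_i$ is assigned a permutation of $\{1,2,3\}$; (b) for the bridge $B_{ij}=uv$ with $u\in T_i$ and $v\in T_j$, each edge of $T_i$ incident to $u$ shares a common edge (namely $B_{ij}$) with each edge of $T_j$ incident to $v$, so the pair of colors used at $u$ in $T_i$ must be disjoint from the pair of colors used at $v$ in $T_j$. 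Two disjoint $2$-subsets of $\{1,2,3\}$ do not exist, yielding a contradiction. For the upper bound, I would exhibit an explicit $4$-coloring: assign the bridges the colors $1,2,3$; on each triangle use color $4$ on the edge opposite the bridge endpoint, and distribute the other two colors to respect the cross-bridge disjointness constraints.

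For $n\ge 3$, the lower bound $\chi_i'(S_3^n)\ge 5$ is proved by extending the saturation argument from $n=2$: inside a single $S_3^2$-block of $S_3^3$ the palette of $4$ colors is already very rigid (a triangle plus its three incident bridges projects to the triangular prism in the common-edge graph, which is already tight), and the additional top-level bridges incident to the block impose further common-edge constraints across blocks that are incompatible with a $4$-color palette. For the upper bound, I would induct on $n\ge 3$ while maintaining two ``types'' of injective edge $5$-colorings (say types A and B) of $S_3^n$, distinguished by the color pattern they induce on the two edges incident to each extreme vertex. The base case $n=3$ requires a direct construction of both types. For the inductive step, view $S_3^{n+1}$ as three copies of $S_3^n$ joined by three top-level bridges: assign each copy a type so that the color pairs appearing at the two endpoints of each new bridge are compatible, color the new bridges to avoid cross-copy common-edge conflicts, and verify that the resulting $5$-coloring is itself of type A or type B.

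The principal obstacle is the inductive step: the two types must be rigid enough for the induction to close (each copy of a prescribed type must compose into the enlarged graph as a coloring of a prescribed type), yet flexible enough that all admissible combinations of types fit together across the new bridges. The paper's remark that the two types of colorings ``need to be combined in an appropriate way so that both types of colorings are obtained for all $n$'' alludes precisely to this challenge; designing the types—likely by stipulating, for each type, the pair of colors appearing on the two edges incident to every extreme vertex of $S_3^n$—will be the heart of the argument.
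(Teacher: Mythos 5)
Your treatment of $n=1$ and $n=2$ is sound and essentially coincides with the paper's: for the lower bound at $n=2$ you observe that the three edges of each base triangle pairwise admit a common edge (hence receive a permutation of $\{1,2,3\}$ in any hypothetical $3$-coloring) and that a bridge forces the two color pairs at its endpoints to be disjoint $2$-subsets of $\{1,2,3\}$, which is impossible; this is the same four-pairwise-conflicting-edges configuration the paper isolates as the subgraph $H$. Your description of the $4$-coloring of $S_3^2$ is ambiguous (each triangle has two bridge endpoints, so ``the edge opposite the bridge endpoint'' does not specify an edge), but a coloring of the kind you describe does exist, so this is a presentational rather than a substantive issue. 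Likewise, your plan for the upper bound when $n\ge 3$ --- maintaining two types of $5$-colorings distinguished by the colors at the extreme vertices and combining three copies of $S_3^{n-1}$ so that each combination again has a prescribed type --- is exactly the paper's Type A / Type B induction, modulo the explicit base-case colorings of $S_3^3$, which any complete write-up must supply.

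The genuine gap is the lower bound $\chi_i'(S_3^n)\ge 5$ for $n\ge 3$, which is the crux of the theorem. Your argument consists of assertions (``already very rigid'', ``already tight'', ``incompatible with a $4$-color palette'') with no derivation of a contradiction. Moreover, the one concrete structure you invoke is miscalibrated: the common-edge graph of a triangle together with its three incident bridges is the triangular prism, which is $3$-chromatic, so it is very far from obstructing a $4$-coloring; no single local block of $S_3^3$ forces five colors. The obstruction only emerges from combining constraints across several triangles and both levels of bridges, and the paper itself does not find a human-readable contradiction: it exhibits a $15$-vertex, $20$-edge subgraph $P$ of $S_3^3$, forms the conflict graph $\widetilde P$ on $20$ vertices, and verifies $\chi(\widetilde P)\ge 5$ \emph{by computer}. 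Until you either produce such a subgraph together with an actual case analysis (or computation) showing its conflict graph is not $4$-colorable, or find some other concrete argument ruling out a $4$-injective edge coloring of $S_3^3$, the case $n\ge 3$ of the theorem is not proved.
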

\begin{proof}
Clearly, $S_3^1 \cong K_3$, and $\chi_i'(S_3^1)=3$. For $n=2$, consider the graph $H$ depicted in Figure~\ref{ThesubgraphsofS3n}(a). The graph $H$ is a subgraph of $S_3^n$ for $n\geq 2$, which needs at least 4 colors in an injective edge coloring. Indeed, it is necessary to color the colored edges with 4 different colors, because of the adjacencies
with the dashed edges. Thus $\chi_i'(S_3^2)\geq 4$. On the other hand, Figure~\ref{IECSierpenskiofC3}(b) provides an injective edge coloring of $S_3^2$ with 4 colors showing that $\chi_i'(S_3^2)= 4$.

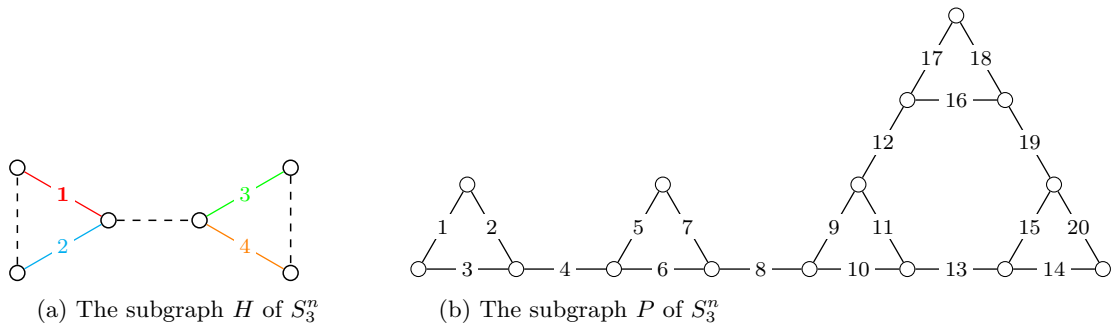
\begin{figure}[h!]
            \begin{subfigure}[b]{0.29\linewidth}
            \begin{tikzpicture}[scale=0.7,inner sep=2pt, line width=0.2mm]
            \draw (0,0) node(1) [circle,draw] {}
                  (0,2) node(2) [circle,draw] {}
                  (1.73,1) node(3) [circle,draw] {}
                  (3.46,1) node(4) [circle,draw] {}
                  (5.19,2) node(5) [circle,draw] {}
                  (5.19,0) node(6) [circle,draw] {};

                  \Edge[color=red, lw=0.5pt,label={{\fontsize{8pt}{8pt}\selectfont \textcolor{red}{\bf{1}}}}](2)(3)
                  \Edge[color=cyan, lw=0.5pt,label={{\fontsize{8pt}{8pt}\selectfont \textcolor{cyan}{2}}}](3)(1)
                  \Edge[color=green, lw=0.5pt,label={{\fontsize{8pt}{8pt}\selectfont \textcolor{green}{3}}}](4)(5)
                  \Edge[color=orange, lw=0.5pt,label={{\fontsize{8pt}{8pt}\selectfont \textcolor{orange}{4}}}](4)(6)

                  \draw[dashed] (1) to (2);
                  \draw[dashed] (3) to (4)
                          (5) to (6);
            \end{tikzpicture}    
            \caption{The subgraph $H$ of $S_3^n$}
            \end{subfigure}
            \hspace{.6cm}
\begin{subfigure}[b]{0.32\linewidth}
\begin{tikzpicture}[scale=0.58,inner sep=2pt, line width=0.2mm]    
            \draw (0,0) node(1) [circle,draw] {}
                  (2,0) node(2) [circle,draw] {}
                  (4,0) node(3) [circle,draw] {}
                  (6,0) node(4) [circle,draw] {}
                  (1,1.73) node(5) [circle,draw] {}
                  (5,1.73) node(6) [circle,draw] {}
                  (2,3.46) node(7) [circle,draw] {}
                  (4,3.46) node(8) [circle,draw] {}
                  (3,5.19) node(9) [circle,draw] {};

            \Edge[color=red,lw=0.5pt,label={{\fontsize{8pt}{8pt}\selectfont \textcolor{red}{{\bf 1}}}}](1)(5)
            \Edge[color=red,lw=2.5pt,label={{\fontsize{8pt}{8pt}\selectfont \textcolor{red}{{\bf 1}}}}](3)(6)
            \Edge[color=red,lw=2.5pt,label={{\fontsize{8pt}{8pt}\selectfont \textcolor{red}{{\bf 1}}}}](6)(8)

            \Edge[color=cyan,lw=0.5pt,label={{\fontsize{8pt}{8pt}\selectfont \textcolor{cyan}{2}}}](1)(2)
            \Edge[color=cyan,lw=2.5pt,label={{\fontsize{8pt}{8pt}\selectfont \textcolor{cyan}{2}}}](4)(6)
            \Edge[color=cyan,lw=0.5pt,label={{\fontsize{8pt}{8pt}\selectfont \textcolor{cyan}{2}}}](7)(9)

            \Edge[color=green,lw=0.5pt,label={{\fontsize{8pt}{8pt}\selectfont \textcolor{green}{3}}}](3)(4)
            \Edge[color=green,lw=0.5pt,label={{\fontsize{8pt}{8pt}\selectfont \textcolor{green}{3}}}](5)(7)
            \Edge[color=green,lw=2.5pt,label={{\fontsize{8pt}{8pt}\selectfont \textcolor{green}{3}}}](7)(8)

            \Edge[color=orange,lw=0.5pt,label={{\fontsize{8pt}{8pt}\selectfont \textcolor{orange}{4}}}](2)(3)
            \Edge[color=orange,lw=0.5pt,label={{\fontsize{8pt}{8pt}\selectfont \textcolor{orange}{4}}}](2)(5)  
            \Edge[color=orange,lw=2.5pt,label={{\fontsize{8pt}{8pt}\selectfont \textcolor{orange}{4}}}](8)(9)  
            
        \end{tikzpicture}
    \caption{The first coloring of $E(S_3^2)$}
\end{subfigure}
\begin{subfigure}[b]{0.32\linewidth}
\begin{tikzpicture}[scale=0.6,inner sep=2pt, line width=0.2mm]    
             \draw (0,0) node(1) [circle,draw] {}
                  (2,0) node(2) [circle,draw] {}
                  (4,0) node(3) [circle,draw] {}
                  (6,0) node(4) [circle,draw] {}
                  (1,1.73) node(5) [circle,draw] {}
                  (5,1.73) node(6) [circle,draw] {}
                  (2,3.46) node(7) [circle,draw] {}
                  (4,3.46) node(8) [circle,draw] {}
                  (3,5.19) node(9) [circle,draw] {};
                  
            \Edge[color=red,lw=2.5pt,label={{\fontsize{8pt}{8pt}\selectfont \textcolor{red}{{\bf 1}}}}](3)(6)
            \Edge[color=red,lw=0.5pt,label={{\fontsize{8pt}{8pt}\selectfont \textcolor{red}{{\bf 1}}}}](5)(7)
            \Edge[color=red,lw=0.5pt,label={{\fontsize{8pt}{8pt}\selectfont \textcolor{red}{{\bf 1}}}}](7)(9)

            \Edge[color=cyan,lw=2.5pt,label={{\fontsize{8pt}{8pt}\selectfont \textcolor{cyan}{2}}}](6)(8)
            \Edge[color=cyan,lw=2.5pt,label={{\fontsize{8pt}{8pt}\selectfont \textcolor{cyan}{2}}}](4)(6)
            \Edge[color=cyan,lw=0.5pt,label={{\fontsize{8pt}{8pt}\selectfont \textcolor{cyan}{2}}}](2)(5)

            \Edge[color=green,lw=0.5pt,label={{\fontsize{8pt}{8pt}\selectfont \textcolor{green}{3}}}](1)(2)
            \Edge[color=green,lw=0.5pt,label={{\fontsize{8pt}{8pt}\selectfont \textcolor{green}{3}}}](2)(3)
            \Edge[color=green,lw=2.5pt,label={{\fontsize{8pt}{8pt}\selectfont \textcolor{green}{3}}}](7)(8)

            \Edge[color=orange,lw=0.5pt,label={{\fontsize{8pt}{8pt}\selectfont \textcolor{orange}{4}}}](3)(4)
            \Edge[color=orange,lw=0.5pt,label={{\fontsize{8pt}{8pt}\selectfont \textcolor{orange}{4}}}](1)(5)  
            \Edge[color=orange,lw=2.5pt,label={{\fontsize{8pt}{8pt}\selectfont \textcolor{orange}{4}}}](8)(9)

\end{tikzpicture}
        \caption{The second coloring of $E(S_3^2)$}
    \end{subfigure}
 \caption{Three subgraphs providing lower bounds.} 
 \label{ThesubgraphsofS3n}
\end{figure}

For $n=3$, it can be verified by computer that $\chi_i'(S_3^3)\ge 5$. Alternatively, note that $S_3^3$ consists of three copies of the graph $S_3^2$, and analyze possible injective edge colorings of $S_3^2$ using $4$ colors by starting with a coloring of a subgraph isomorphic to $H$ (see Figure~\ref{ThesubgraphsofS3n}(a) again). By symmetry, there are exactly two possibilities for the color of the edge that connects two triangles of $H$; see Figures~\ref{ThesubgraphsofS3n}(b) and (c), where edges of the corresponding subgraph isomorphic to $H$ are thick, and each of the two possibilities for the middle edge is considered by using color $1$ or $2$, respectively. Now, by analyzing possible extensions of the partial colorings using 4 colors on the five bold edges, one derives that each of the colorings uniquely extends to an injective edge coloring of $S_3^2$; these unique colorings are depicted in Figures~\ref{ThesubgraphsofS3n}(b) and (c). (While considering possibilities of extensions of these partial colorings of $S_3^2$ using 4 colors, one observes the colors in the three subgraphs isomorphic to $H$, which are formed by the three pairs of triangles. The colors are forced one by one resulting in the unique coloring in each of the two cases.)
Suppose that we could color $E(S_3^3)$ with only three colors. In $S_3^3$, two copies of $S_3^2$ are connected with an edge $e$, and the edge $e$ can be colored with the same color as one of its neighboring edges, but only in one of these two copies of $S_3^2$ (for otherwise there are three consecutive edges with the same color, which is impossible in an injective edge coloring). Hence, there is a copy of $S_3^2$, such that $e$ is colored with a color different from the colors of its neighboring edges. However, checking the colors of edges in Figures~\ref{ThesubgraphsofS3n}(b) and (c) that are in a triangle or incident with the triangle, one derives that no color in $[4]$ given to $e$ results in an injective edge coloring. Thus, $\chi_i'(S_3^3)\ge5$. Figure~\ref{IECSierpenskiofC3}(c) gives an injective edge coloring of $S_3^3$ with $5$ colors, which yields $\chi_i'(S_3^3)=5$.

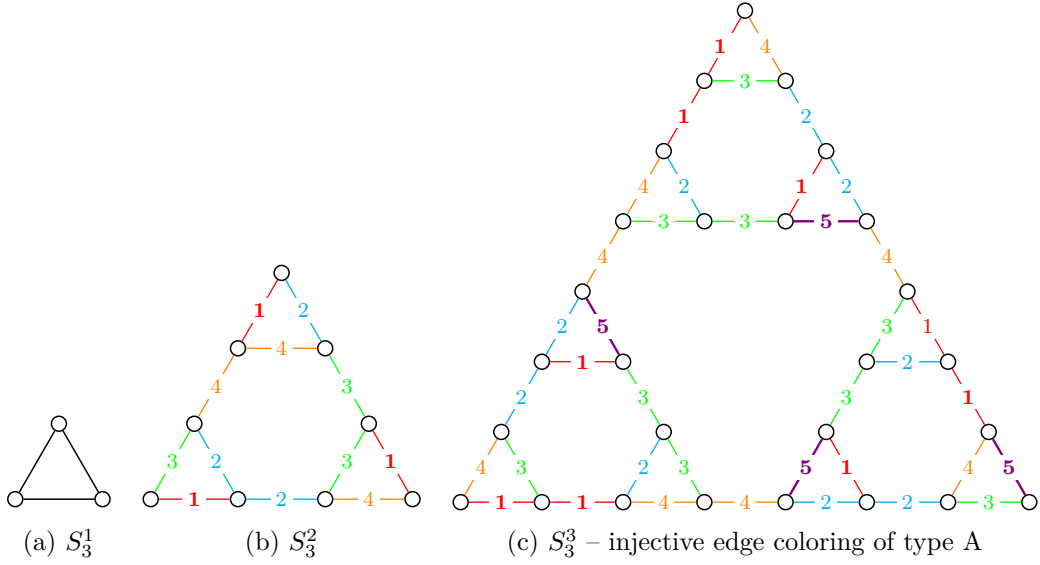
\begin{figure}[h!]
    \centering
        \begin{tabular}[b]{@{}ccc@{}}
        \begin{tikzpicture}[scale=0.58,inner sep=2pt, line width=0.2mm]
            \draw (-3,0) node(a) [circle,draw] {}
                  (-1,0) node(b) [circle,draw] {}
                  (-2,1.73) node(c) [circle,draw] {};
            \draw[] (a) to (b) to (c) to (a); 

            \draw(-2,-1) node(x) []{(a) $S_3^1$};
        \end{tikzpicture}
        
        &

\begin{tikzpicture}[scale=0.58,inner sep=2pt, line width=0.2mm]    
            \draw (0,0) node(1) [circle,draw] {}
                  (2,0) node(2) [circle,draw] {}
                  (4,0) node(3) [circle,draw] {}
                  (6,0) node(4) [circle,draw] {}
                  (1,1.73) node(5) [circle,draw] {}
                  (5,1.73) node(6) [circle,draw] {}
                  (2,3.46) node(7) [circle,draw] {}
                  (4,3.46) node(8) [circle,draw] {}
                  (3,5.19) node(9) [circle,draw] {};

            \Edge[color=red,lw=0.5pt,label={{\fontsize{8pt}{8pt}\selectfont \textcolor{red}{{\bf 1}}}}](1)(2)
            \Edge[color=red,lw=0.5pt,label={{\fontsize{8pt}{8pt}\selectfont \textcolor{red}{{\bf 1}}}}](4)(6)
            \Edge[color=red,lw=0.5pt,label={{\fontsize{8pt}{8pt}\selectfont \textcolor{red}{{\bf 1}}}}](7)(9)

            \Edge[color=cyan,lw=0.5pt,label={{\fontsize{8pt}{8pt}\selectfont \textcolor{cyan}{2}}}](5)(2)
            \Edge[color=cyan,lw=0.5pt,label={{\fontsize{8pt}{8pt}\selectfont \textcolor{cyan}{2}}}](2)(3)
            \Edge[color=cyan,lw=0.5pt,label={{\fontsize{8pt}{8pt}\selectfont \textcolor{cyan}{2}}}](9)(8)

            \Edge[color=green,lw=0.5pt,label={{\fontsize{8pt}{8pt}\selectfont \textcolor{green}{3}}}](1)(5)
            \Edge[color=green,lw=0.5pt,label={{\fontsize{8pt}{8pt}\selectfont \textcolor{green}{3}}}](3)(6)
            \Edge[color=green,lw=0.5pt,label={{\fontsize{8pt}{8pt}\selectfont \textcolor{green}{3}}}](6)(8)

            \Edge[color=orange,lw=0.5pt,label={{\fontsize{8pt}{8pt}\selectfont \textcolor{orange}{4}}}](3)(4)
            \Edge[color=orange,lw=0.5pt,label={{\fontsize{8pt}{8pt}\selectfont \textcolor{orange}{4}}}](5)(7)  
            \Edge[color=orange,lw=0.5pt,label={{\fontsize{8pt}{8pt}\selectfont \textcolor{orange}{4}}}](7)(8)  
            
            \draw(3,-1) node(x) []{(b) $S_3^2$};              
        \end{tikzpicture}

        &

    \begin{tikzpicture}[scale=0.54,inner sep=2pt, line width=0.2mm]
            \draw (7,0) node(1) [circle,draw] {}
                  (9,0) node(2) [circle,draw] {}
                  (11,0) node(3) [circle,draw] {}
                  (13,0) node(4) [circle,draw] {}
                  (8,1.73) node(5) [circle,draw] {}
                  (12,1.73) node(6) [circle,draw] {}
                  (9,3.46) node(7) [circle,draw] {}
                  (11,3.46) node(8) [circle,draw] {}
                  (10,5.19) node(9) [circle,draw] {};
            \draw (11,6.92) node(10) [circle,draw] {}
                  (13,6.92) node(11) [circle,draw] {}
                  (12,8.65) node(12) [circle,draw] {}
                  (13,10.38) node(13) [circle,draw] {}
                  (14,12.11) node(14) [circle,draw] {}
                  (15,10.38) node(15) [circle,draw] {}
                  (16,8.65) node(16) [circle,draw] {}
                  (15,6.92) node(17) [circle,draw] {}
                  (17,6.92) node(18) [circle,draw] {};
            \draw (18,5.19) node(19) [circle,draw] {}
                  (19,3.46) node(20) [circle,draw] {}
                  (17,3.46) node(21) [circle,draw] {}
                  (16,1.73) node(22) [circle,draw] {}
                  (20,1.73) node(23) [circle,draw] {}
                  (21,0) node(24) [circle,draw] {}
                  (19,0) node(25) [circle,draw] {}
                  (17,0) node(26) [circle,draw] {}
                  (15,0) node(27) [circle,draw] {};   

            \Edge[color=red,lw=0.5pt,label={{\fontsize{8pt}{8pt}\selectfont \textcolor{red}{{\bf 1}}}}](1)(2)
            \Edge[color=red,lw=0.5pt,label={{\fontsize{8pt}{8pt}\selectfont \textcolor{red}{{\bf 1}}}}](3)(2)
            \Edge[color=red,lw=0.5pt,label={{\fontsize{8pt}{8pt}\selectfont \textcolor{red}{{\bf 1}}}}](7)(8)
            \Edge[color=red,lw=0.5pt,label={{\fontsize{8pt}{8pt}\selectfont \textcolor{red}{{\bf 1}}}}](26)(22)
            \Edge[color=violet,lw=0.9pt,label={{\fontsize{8pt}{8pt}\selectfont \textcolor{violet}{\bf 5}}}](24)(23)
            \Edge[color=red,lw=0.5pt,label={{\fontsize{8pt}{8pt}\selectfont \textcolor{red}{{\bf 1}}}}](23)(20)
            \Edge[color=red,lw=0.5pt,label={{\fontsize{8pt}{8pt}\selectfont \textcolor{red}{{\bf 1 }}}}](16)(17)
            \Edge[color=red,lw=0.5pt,label={{\fontsize{8pt}{8pt}\selectfont \textcolor{red}{{\bf 1}}}}](12)(13)
            \Edge[color=red,lw=0.5pt,label={{\fontsize{8pt}{8pt}\selectfont \textcolor{red}{{\bf 1 }}}}](13)(14)

            \Edge[color=cyan,lw=0.5pt,label={{\fontsize{8pt}{8pt}\selectfont \textcolor{cyan}{2}}}](3)(6)
            \Edge[color=cyan,lw=0.5pt,label={{\fontsize{8pt}{8pt}\selectfont \textcolor{cyan}{2}}}](5)(7)
            \Edge[color=cyan,lw=0.5pt,label={{\fontsize{8pt}{8pt}\selectfont \textcolor{cyan}{2}}}](7)(9)
            \Edge[color=cyan,lw=0.5pt,label={{\fontsize{8pt}{8pt}\selectfont \textcolor{cyan}{2}}}](11)(12)
            \Edge[color=cyan,lw=0.5pt,label={{\fontsize{8pt}{8pt}\selectfont \textcolor{cyan}{2}}}](15)(16)
            \Edge[color=cyan,lw=0.5pt,label={{\fontsize{8pt}{8pt}\selectfont \textcolor{cyan}{2}}}](18)(16)
            \Edge[color=cyan,lw=0.5pt,label={{\fontsize{8pt}{8pt}\selectfont \textcolor{cyan}{2}}}](20)(21)
            \Edge[color=cyan,lw=0.5pt,label={{\fontsize{8pt}{8pt}\selectfont \textcolor{cyan}{2}}}](27)(26)
            \Edge[color=cyan,lw=0.5pt,label={{\fontsize{8pt}{8pt}\selectfont \textcolor{cyan}{2}}}](25)(26)

            \Edge[color=green,lw=0.5pt,label={{\fontsize{8pt}{8pt}\selectfont \textcolor{green}{3}}}](13)(15)
            \Edge[color=green,lw=0.5pt,label={{\fontsize{8pt}{8pt}\selectfont \textcolor{green}{3}}}](10)(11)
            \Edge[color=green,lw=0.5pt,label={{\fontsize{8pt}{8pt}\selectfont \textcolor{green}{3}}}](11)(17)
            \Edge[color=green,lw=0.5pt,label={{\fontsize{8pt}{8pt}\selectfont \textcolor{green}{3}}}](19)(21)
            \Edge[color=green,lw=0.5pt,label={{\fontsize{8pt}{8pt}\selectfont \textcolor{green}{3}}}](21)(22)
            \Edge[color=orange,lw=0.5pt,label={{\fontsize{8pt}{8pt}\selectfont \textcolor{orange}{4}}}](23)(25)
            \Edge[color=green,lw=0.5pt,label={{\fontsize{8pt}{8pt}\selectfont \textcolor{green}{3}}}](4)(6)
            \Edge[color=green,lw=0.5pt,label={{\fontsize{8pt}{8pt}\selectfont \textcolor{green}{3}}}](8)(6)
            \Edge[color=green,lw=0.5pt,label={{\fontsize{8pt}{8pt}\selectfont \textcolor{green}{3}}}](5)(2)

            \Edge[color=orange,lw=0.5pt,label={{\fontsize{8pt}{8pt}\selectfont \textcolor{orange}{4}}}](1)(5)
            \Edge[color=orange,lw=0.5pt,label={{\fontsize{8pt}{8pt}\selectfont \textcolor{orange}{4}}}](9)(10)
            \Edge[color=orange,lw=0.5pt,label={{\fontsize{8pt}{8pt}\selectfont \textcolor{orange}{4}}}](10)(12)
            \Edge[color=orange,lw=0.5pt,label={{\fontsize{8pt}{8pt}\selectfont \textcolor{orange}{4}}}](14)(15)
            \Edge[color=orange,lw=0.5pt,label={{\fontsize{8pt}{8pt}\selectfont \textcolor{orange}{4}}}](18)(19)
            \Edge[color=red,lw=0.5pt,label={{\fontsize{8pt}{8pt}\selectfont \textcolor{red}{1}}}](19)(20)
            \Edge[color=green,lw=0.5pt,label={{\fontsize{8pt}{8pt}\selectfont \textcolor{green}{3}}}](24)(25)
            \Edge[color=orange,lw=0.5pt,label={{\fontsize{8pt}{8pt}\selectfont \textcolor{orange}{4}}}](3)(4)
            \Edge[color=orange,lw=0.5pt,label={{\fontsize{8pt}{8pt}\selectfont \textcolor{orange}{4}}}](4)(27)
            
            \Edge[color=violet,lw=0.9pt,label={{\fontsize{8pt}{8pt}\selectfont \textcolor{violet}{\bf 5}}}](22)(27)
            \Edge[color=violet,lw=0.9pt,label={{\fontsize{8pt}{8pt}\selectfont \textcolor{violet}{\bf 5}}}](8)(9)
            \Edge[color=violet,lw=0.9pt,label={{\fontsize{8pt}{8pt}\selectfont \textcolor{violet}{\bf 5}}}](17)(18)  
            \draw(14,-1) node(x) []{(c) $S_3^3$ -- injective edge coloring of type A}; 
        \end{tikzpicture}
        \end{tabular}
    \caption{Sierpi\'nski graph over $K_3$ for $n=1,2$ and $3$.}
    \label{IECSierpenskiofC3}
\end{figure}


\begin{figure}[h]
    \centering
    \begin{tikzpicture}[scale=0.54,inner sep=2pt, line width=0.2mm]
            \draw (7,0) node(1) [circle,draw] {}
                  (9,0) node(2) [circle,draw] {}
                  (11,0) node(3) [circle,draw] {}
                  (13,0) node(4) [circle,draw] {}
                  (8,1.73) node(5) [circle,draw] {}
                  (12,1.73) node(6) [circle,draw] {}
                  (9,3.46) node(7) [circle,draw] {}
                  (11,3.46) node(8) [circle,draw] {}
                  (10,5.19) node(9) [circle,draw] {};
            \draw (11,6.92) node(10) [circle,draw] {}
                  (13,6.92) node(11) [circle,draw] {}
                  (12,8.65) node(12) [circle,draw] {}
                  (13,10.38) node(13) [circle,draw] {}
                  (14,12.11) node(14) [circle,draw] {}
                  (15,10.38) node(15) [circle,draw] {}
                  (16,8.65) node(16) [circle,draw] {}
                  (15,6.92) node(17) [circle,draw] {}
                  (17,6.92) node(18) [circle,draw] {};
            \draw (18,5.19) node(19) [circle,draw] {}
                  (19,3.46) node(20) [circle,draw] {}
                  (17,3.46) node(21) [circle,draw] {}
                  (16,1.73) node(22) [circle,draw] {}
                  (20,1.73) node(23) [circle,draw] {}
                  (21,0) node(24) [circle,draw] {}
                  (19,0) node(25) [circle,draw] {}
                  (17,0) node(26) [circle,draw] {}
                  (15,0) node(27) [circle,draw] {};   

            \Edge[color=green,lw=0.5pt,label={{\fontsize{8pt}{8pt}\selectfont \textcolor{green}{3}}}](1)(2)
            \Edge[color=red,lw=0.5pt,label={{\fontsize{8pt}{8pt}\selectfont \textcolor{red}{\bf 1}}}](3)(2)
            \Edge[color=red,lw=0.5pt,label={{\fontsize{8pt}{8pt}\selectfont \textcolor{red}{\bf 1}}}](7)(8)
            \Edge[color=red,lw=0.5pt,label={{\fontsize{8pt}{8pt}\selectfont \textcolor{red}{\bf 1}}}](26)(22)
            \Edge[color=violet,lw=0.9pt,label={{\fontsize{8pt}{8pt}\selectfont \textcolor{violet}{\bf 5}}}](24)(23)
            \Edge[color=red,lw=0.5pt,label={{\fontsize{8pt}{8pt}\selectfont \textcolor{red}{\bf 1}}}](23)(20)
            \Edge[color=red,lw=0.5pt,label={{\fontsize{8pt}{8pt}\selectfont \textcolor{red}{\bf 1}}}](16)(17)
            \Edge[color=red,lw=0.5pt,label={{\fontsize{8pt}{8pt}\selectfont \textcolor{red}{\bf 1}}}](12)(13)
            \Edge[color=red,lw=0.5pt,label={{\fontsize{8pt}{8pt}\selectfont \textcolor{red}{\bf 1}}}](13)(14)

            \Edge[color=cyan,lw=0.5pt,label={{\fontsize{8pt}{8pt}\selectfont \textcolor{cyan}{2}}}](3)(6)
            \Edge[color=cyan,lw=0.5pt,label={{\fontsize{8pt}{8pt}\selectfont \textcolor{cyan}{2}}}](5)(7)
            \Edge[color=cyan,lw=0.5pt,label={{\fontsize{8pt}{8pt}\selectfont \textcolor{cyan}{2}}}](7)(9)
            \Edge[color=cyan,lw=0.5pt,label={{\fontsize{8pt}{8pt}\selectfont \textcolor{cyan}{2}}}](11)(12)
            \Edge[color=cyan,lw=0.5pt,label={{\fontsize{8pt}{8pt}\selectfont \textcolor{cyan}{2}}}](15)(16)
            \Edge[color=cyan,lw=0.5pt,label={{\fontsize{8pt}{8pt}\selectfont \textcolor{cyan}{2}}}](18)(16)
            \Edge[color=cyan,lw=0.5pt,label={{\fontsize{8pt}{8pt}\selectfont \textcolor{cyan}{2}}}](20)(21)
            \Edge[color=cyan,lw=0.5pt,label={{\fontsize{8pt}{8pt}\selectfont \textcolor{cyan}{2}}}](27)(26)
            \Edge[color=cyan,lw=0.5pt,label={{\fontsize{8pt}{8pt}\selectfont \textcolor{cyan}{2}}}](25)(26)

            \Edge[color=green,lw=0.5pt,label={{\fontsize{8pt}{8pt}\selectfont \textcolor{green}{3}}}](13)(15)
            \Edge[color=green,lw=0.5pt,label={{\fontsize{8pt}{8pt}\selectfont \textcolor{green}{3}}}](10)(11)
            \Edge[color=green,lw=0.5pt,label={{\fontsize{8pt}{8pt}\selectfont \textcolor{green}{3}}}](11)(17)
            \Edge[color=green,lw=0.5pt,label={{\fontsize{8pt}{8pt}\selectfont \textcolor{green}{3}}}](19)(21)
            \Edge[color=green,lw=0.5pt,label={{\fontsize{8pt}{8pt}\selectfont \textcolor{green}{3}}}](21)(22)
            \Edge[color=orange,lw=0.5pt,label={{\fontsize{8pt}{8pt}\selectfont \textcolor{orange}{4}}}](23)(25)
            \Edge[color=green,lw=0.5pt,label={{\fontsize{8pt}{8pt}\selectfont \textcolor{green}{3}}}](4)(6)
            \Edge[color=green,lw=0.5pt,label={{\fontsize{8pt}{8pt}\selectfont \textcolor{green}{3}}}](8)(6)
            \Edge[color=orange,lw=0.5pt,label={{\fontsize{8pt}{8pt}\selectfont \textcolor{orange}{4}}}](5)(2)

            \Edge[color=violet,lw=0.9pt,label={{\fontsize{8pt}{8pt}\selectfont \textcolor{violet}{\bf 5}}}](1)(5)
            \Edge[color=orange,lw=0.5pt,label={{\fontsize{8pt}{8pt}\selectfont \textcolor{orange}{4}}}](9)(10)
            \Edge[color=orange,lw=0.5pt,label={{\fontsize{8pt}{8pt}\selectfont \textcolor{orange}{4}}}](10)(12)
            \Edge[color=orange,lw=0.5pt,label={{\fontsize{8pt}{8pt}\selectfont \textcolor{orange}{4}}}](14)(15)
            \Edge[color=orange,lw=0.5pt,label={{\fontsize{8pt}{8pt}\selectfont \textcolor{orange}{4}}}](18)(19)
            \Edge[color=red,lw=0.5pt,label={{\fontsize{8pt}{8pt}\selectfont \textcolor{red}{\bf 1}}}](19)(20)
            \Edge[color=green,lw=0.5pt,label={{\fontsize{8pt}{8pt}\selectfont \textcolor{green}{3}}}](24)(25)
            \Edge[color=red,lw=0.5pt,label={{\fontsize{8pt}{8pt}\selectfont \textcolor{red}{\bf 1}}}](3)(4)
            \Edge[color=orange,lw=0.5pt,label={{\fontsize{8pt}{8pt}\selectfont \textcolor{orange}{4}}}](4)(27)
            
            \Edge[color=violet,lw=0.9pt,label={{\fontsize{8pt}{8pt}\selectfont \textcolor{violet}{\bf 5}}}](22)(27)
            \Edge[color=violet,lw=0.9pt,label={{\fontsize{8pt}{8pt}\selectfont \textcolor{violet}{\bf 5}}}](8)(9)
            \Edge[color=violet,lw=0.9pt,label={{\fontsize{8pt}{8pt}\selectfont \textcolor{violet}{\bf 5}}}](17)(18)  
            \draw(14,-1) node(x) []{$S_3^3$ -- injective edge coloring of type B}; 
        \end{tikzpicture}
        
    \caption{Type B.}
    \label{fig:S3typeB}
\end{figure}

Now, we extend the above coloring to an injective edge coloring of $S_3^n$ for $n\geq 4$.
In fact, we obtain two types of injective edge colorings of $S_3^n$ for each $n\ge 3$. 
For $n=3$ these two coloring are presented in two figures: Figure~\ref{IECSierpenskiofC3}(c) shows the coloring called Type A, and Figure~\ref{fig:S3typeB} the coloring called Type B. The reader can verify that both colorings are injective edge colorings of $S_3^3$ using $5$ colors. Important properties of these colorings are as follows:
\begin{enumerate}[label=\roman*.,noitemsep]
\item the top extreme vertex has incident edges colored with colors $1$ and $4$;
\item the bottom-right extreme vertex has incident edges colored with colors $3$ and $5$ and the edge with color $5$ has no adjacent edges with color $5$; 
\item for the bottom-left extreme vertex: in Type A coloring its incident edges are colored with colors $1$ and $4$, while in Type B coloring its incident edges are colored with colors $3$ and $5$ and the edge with color $5$ has no adjacent edges with color $5$. 
\end{enumerate}

\begin{figure}[H]
    \centering
    \begin{tikzpicture}[scale=0.58,inner sep=2pt, line width=0.2mm]
        \tikzstyle{triangle}=[draw, shape=regular polygon, regular polygon sides=3,draw,thick,inner sep=0pt,minimum
size=4cm], 

\draw (-3.75,3.35) node(a) [] {};
\draw (-2.89,4.9) node(b) [] {};
\draw (3.75,3.35) node(c) [] {};
\draw (2.89,4.9) node(d) [] {};
\draw (-0.85,-1.72) node(e) [] {};
\draw (0.85,-1.72) node(f) [] {};

        \node [triangle](1) at (-3.7,0) {type A};
          \node [triangle](1) at (3.7,0) {type A};
            \node [triangle](1) at (0,6.5) {type B};
        
           \Edge[color=violet,lw=1.2pt,label={{\fontsize{6pt}{8pt}\selectfont \textcolor{violet}{\bf 5}}}](a)(b)
           
  \Edge[color=violet,lw=1.2pt,label={{\fontsize{6pt}{8pt}\selectfont \textcolor{violet}{\bf 5}}}](c)(d)  

  \Edge[color=violet,lw=1.2pt,label={{\fontsize{6pt}{8pt}\selectfont \textcolor{violet}{\bf 5}}}](e)(f)  

  \draw(-3.7,1) node(x) []{$S_3^{n-1}$};    
    \draw(3.7,1) node(y) []{$S_3^{n-1}$}; 
    \draw(0,7.5) node(z) []{$S_3^{n-1}$}; 
           \end{tikzpicture}
\caption{Coloring of Type A of Sierpi\'nski graph $S_3^n$.}
\label{fig:TypeA}
\end{figure}

\begin{figure}[h!]
    \centering
    \begin{tikzpicture}[scale=0.58,inner sep=2pt, line width=0.2mm]
        \tikzstyle{triangle}=[draw, shape=regular polygon, regular polygon sides=3,draw,thick,inner sep=0pt,minimum
size=4cm], 

\draw (-3.75,3.35) node(a) [] {};
\draw (-2.89,4.9) node(b) [] {};
\draw (3.75,3.35) node(c) [] {};
\draw (2.89,4.9) node(d) [] {};
\draw (-0.85,-1.72) node(e) [] {};
\draw (0.85,-1.72) node(f) [] {};

        \node [triangle](1) at (-3.7,0) {type B};
          \node [triangle](1) at (3.7,0) {type A};
            \node [triangle](1) at (0,6.5) {type B};
        
           \Edge[color=violet,lw=1.2pt,label={{\fontsize{9pt}{8pt}\selectfont \textcolor{violet}{\bf 5}}}](a)(b)
           
  \Edge[color=violet,lw=1.2pt,label={{\fontsize{9pt}{8pt}\selectfont \textcolor{violet}{\bf 5}}}](c)(d)  

  \Edge[color=violet,lw=1.2pt,label={{\fontsize{9pt}{8pt}\selectfont \textcolor{violet}{\bf 5}}}](e)(f)  

  \draw(-3.7,1) node(x) []{$S_3^{n-1}$};    
    \draw(3.7,1) node(y) []{$S_3^{n-1}$}; 
    \draw(0,7.5) node(z) []{$S_3^{n-1}$}; 
           \end{tikzpicture}
\caption{Coloring of Type B of Sierpi\'nski graph $S_3^n$.}
\label{fig:TypeB}
\end{figure}

For $n\ge 4$, we also obtain two injective edge colorings of $S_3^n$, again called Type A and Type B, that have the same three properties as described for $S_3^3$. We achieve this as follows. Note that $S_3^n$ can be obtained from three copies of $S_3^{n-1}$, as shown in Figure~\ref{fig:TypeA}. The figure also indicates which colorings of each of the copies of $S_3^{n-1}$ is used in each part (either coloring of Type A or Type B). The three edges between the copies of $S_3^{n-1}$ are colored by $5$. One can easily verify that the resulting coloring is also an injective edge coloring of $S_3^{n}$, where we assume by induction that the colorings of $S_3^{n-1}$ are injective edge colorings of Types A or B with specific properties, i, ii and iii, described above. In addition, note that the coloring indicated in Figure~\ref{fig:TypeA} is an injective edge coloring of $S_3^n$ of Type A, as defined in i, ii, and iii. 
Now, Figure~\ref{fig:TypeB} presents a similar construction, which also gives an injective edge coloring of $S_3^n$, this time of Type B. By induction on the dimension $n$ of the Sierpi\'{n}ski graph $S_3^n$, we obtain both types of injective edge colorings of $S_3^{n}$, which completes the proof.\end{proof}

The authors of~\cite{bksy} obtained the exact formula for the injective chromatic number of standard Sierpi\' nski graphs ($\chi_i(S_{K_p}^n)=p$ for any $p\ge 3$ and any $n$). The proof of the formula was general in the sense that the order $p\ge 3$ of the complete graph $K_p$ did not play any particular role. We think that determining the injective chromatic indices of the graphs $S_{K_p}^n$ will be much more demanding. In addition, we think that the proofs for different orders $p$ may be different, and may be slightly easier for some smaller values of $p$ such as $4$ or $5$. We pose this as the following problem for which even partial solutions may be interesting. 

\begin{problem}
Determine $\chi_i'(S_{K_p}^n)$ for $p\ge 4$ and $n\ge 2$.     
\end{problem}

Next, we present a general result on the injective chromatic index in generalized Sierpi\'{n}ski graphs, which is in the same spirit as some results in Section~\ref{sec:inj}. Notably, in Theorem~\ref{thm:SG2} we proved that $\chi_i(S_G^2)\le\chi_i(S_G^n)\le \chi_i(S_G^2)+1$ holds for any graph $G$ and any $n\ge 3$, and then in Theorem~\ref{thm:SG2suff} gave a sufficient condition for attaining the equality $\chi_i(S_G^n)=\chi_i(S_G^2)$. The main difference is that in the vertex version of injective coloring the bounds were with respect to $S_G^2$, while in the edge version the bounds are with respect to $S_G^3$. In addition, we need to restrict to triangle-free graphs.

\begin{theorem}
\label{thm:triangle-free}
For any triangle-free graph $G$ and any positive integer $n\ge 3$,
	$$\chi_i'(S_G^3)\le \chi_i'(S_G^n)\le \chi_i'(S_G^3)+1.$$
In addition, $\chi_i'(S_G^n)=\chi_i'(S_G^3)$ for all $n\ge 3$ if
\begin{enumerate}[label=(\roman*),noitemsep]
    \item there exists a $\chi_i'$-coloring of $S_G^3$, where $\chi_i'(S_G^3)=k$, such that every vertex in $S_G^3$ incident to an edge with color $k$ is at distance at least $3$ from every extreme vertex $(u,u,u)$, $u\in V(G)$, of $S_G^3$; or
    \item $G$ is bipartite with the bipartition $V(G)=A\cup B$ and there exists a $\chi_i'$-coloring of $S_G^3$, where $\chi_i'(S_G^3)=k$, such that for every $u\in A$ the edges incident with the extreme vertex $(u,u,u)$ receive color $k$, and every vertex in $S_G^3$ incident to an edge with color $k$ is at distance at least $2$ from every extreme vertex $(v,v,v)$, where $v\in B$.
\end{enumerate}
\label{thm:SG3}
\end{theorem}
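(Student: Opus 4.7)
The proof naturally divides into the two inequalities of the main bound and the two sufficient conditions for equality. The overarching plan is to view $S_G^n$ as $|V(G)|^{n-3}$ pairwise vertex-disjoint copies of $S_G^3$ joined by single ``external'' edges between extreme vertices of appropriately related copies, and to extend a given coloring of $S_G^3$ to all of $S_G^n$. The lower bound $\chi_i'(S_G^3)\le \chi_i'(S_G^n)$ is immediate: for any prefix $w\in V(G)^{n-3}$ the subgraph of $S_G^n$ induced by vertices of the form $(w,x_1,x_2,x_3)$ is isomorphic to $S_G^3$, and the restriction of any injective edge coloring of $S_G^n$ to this subgraph is itself an injective edge coloring.

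For the upper bound $\chi_i'(S_G^n)\le \chi_i'(S_G^3)+1$, I would take a $\chi_i'$-coloring $c$ of $S_G^3$ with $k$ colors and define $c_1\colon E(S_G^n)\to[k+1]$ by applying $c$ on each internal edge and assigning the fresh color $k+1$ to each external edge. To verify $c_1$ is an injective edge coloring, I would analyse length-three paths $e_1 e_2 e_3$ in $S_G^n$ by cases on whether $e_2$ is internal or external. Three structural observations carry the case analysis: distinct extreme vertices of $S_G^3$ are pairwise at distance at least three (so no internal edge of a copy joins two extreme vertices); each vertex of $S_G^n$ is incident to at most one external edge (a direct consequence of the adjacency rule of $S_G^n$); and, when $G$ is triangle-free, no two distinct edges incident to a common extreme vertex of $S_G^3$ appear as the end-edges of any length-three path in $S_G^3$. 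The first two observations rule out scenarios involving two external edges sharing a common middle, leaving the main obstacle: the case where $e_2$ is external and $e_1, e_3$ are internal edges at the two extreme endpoints of $e_2$ in adjacent copies. This case would be handled by using the triangle-free hypothesis to locally re-color certain extreme-incident edges with the buffer color $k+1$, so that $e_1$ and $e_3$ ultimately receive distinct colors.

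For condition (i), no extra color is needed: using $c$ inside each copy and coloring every external edge by $k$ works. The distance-three-from-extreme hypothesis forces every edge within distance two of an extreme vertex of $S_G^3$ to have color in $\{1,\ldots,k-1\}$, which handles the simpler conflict configurations directly. For the delicate external-$e_2$ case, where both end-edges lie in $\{1,\ldots,k-1\}$, I would combine the distance hypothesis with the three structural observations above to argue that the end-edges receive distinct colors.

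For condition (ii), bipartiteness of $G$ forces every external edge of $S_G^n$ to join an $A$-extreme vertex of one copy to a $B$-extreme vertex of another. Using $c$ inside each copy and coloring every external edge by $k$, the delicate case is settled directly: the first clause of (ii) forces $c(e_1)=k$ on the $A$-side, while the distance-two-from-$B$-extreme clause forces $c(e_3)\ne k$ on the $B$-side, and therefore $c_1(e_1)\ne c_1(e_3)$. Throughout, the hardest case is the ``external middle edge with two internal extreme-incident end-edges'' scenario, which is handled by triangle-freeness for the main $+1$ bound, by the distance-three condition for (i), and by the bipartite-plus-distance-two condition for (ii).
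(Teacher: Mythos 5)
Your overall strategy coincides with the paper's: decompose $S_G^n$ into $|V(G)|^{n-3}$ copies of $S_G^3$, reuse a $\chi_i'$-coloring inside each copy, and treat the external edges and their neighbourhoods specially; your argument for condition (ii) matches the paper's and is correct. Two points in the $+1$ bound need tightening, though the idea is right. First, ``locally re-color certain extreme-incident edges with the buffer color $k+1$'' must be made one-sided: for each $xy\in E(G)$ designate one endpoint, say $x$, and recolor to $k+1$ all internal edges at the $x$-side extreme vertex of every corresponding external edge (well defined because each extreme vertex of a copy has at most one external neighbour); recoloring at both ends would give $e_1$ and $e_3$ the same color $k+1$ while $e_2$ is their common edge. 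The paper formalizes exactly this with a ``chosen endvertex'' for each edge of $G$. Second, your observation that distinct extreme vertices of $S_G^3$ are pairwise at distance at least $3$ is too weak for some of the common-edge checks (two edges pendant at vertices at distance exactly $3$ can still have a common edge); the paper uses the correct bound, distance at least $7$.

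The genuine gap is in condition (i). You keep $c$ unchanged inside each copy, color every external edge $k$, and assert that the delicate case --- $e_2$ external, $e_1,e_3$ internal at its two extreme endpoints, both colored in $\{1,\dots,k-1\}$ --- follows from ``the distance hypothesis with the three structural observations''. It does not: hypothesis (i) constrains only where color $k$ appears, and $e_1$, $e_3$ project to edges incident with two \emph{different} extreme vertices of $S_G^3$, so nothing prevents $c(e_1)=c(e_3)\in\{1,\dots,k-1\}$, which violates injectivity since $e_2$ is a common edge of $e_1$ and $e_3$; the structural observations are colorblind and cannot supply the missing inequality. The paper closes this exactly by retaining the chosen-vertex recoloring from the $+1$ bound, now recoloring to $k$ instead of $k+1$: then $e_1$ (on the chosen side) receives color $k$ while $e_3$ keeps a color in $[k-1]$, because hypothesis (i) forces every edge incident with an extreme vertex to avoid color $k$; moreover the distance-at-least-$3$ condition guarantees that the newly $k$-colored edges (external edges and recolored extreme-incident edges) have no common edge with the edges originally colored $k$ by $c$. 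Without this recoloring step your construction for (i) fails.
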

\begin{proof}
Clearly, $\chi_i(S_G^n)\geq \chi_i(S_G^3)$, since $S_G^3$ is a subgraph of $S_G^n$.

Let $f:E(S_G^3)\rightarrow [k]$ be a $\chi_i'$-coloring of $S_G^3$. For any $n\ge 4$ we will introduce a coloring $f_1:E(S_G^n)\rightarrow [k+1]$ for which we will prove it is an injective edge coloring of $S_G^n$. Note that $V(S_G^n)$ can be partitioned into $|V(G)|^{n-3}$ subsets each of which induces the graph $S_G^3$.  First, we assign the color $k+1$ to the edges between different copies of $S_G^3$ in $S_G^n$. That is, for any edge $(u_1,\ldots,u_{n-3},v,v,v)(v_1,\ldots,v_{n-3},u,u,u)$, where $u_i,v_i\in V(G)$ for all $i\in [n-3]$ and $uv\in E(G)$, we assign $$f_1((u_1,\ldots,u_{n-3},v,v,v)(v_1,\ldots,v_{n-3},u,u,u))=k+1$$ 
(note that there exists $d\in [n-4]$ such that $u_i=v_i$ for all $i<d$, $u_d=u$, $v_d=v$, and $u_i=v$, $v_i=u$ for all $i\in\{d+1,\ldots,n-4\}$).
Concerning the edges whose endvertices belong to one and the same copy of $S_G^3$, we color them in the same way as they are colored by $f$ with some exceptions that we specify next. For this purpose, for each edge $uv\in E(G)$ we arbitrarily choose one endvertex, either $u$ or $v$, and we call it the chosen vertex of the edge $uv$. Now, consider a copy of $S_G^3$ in $S_G^n$ and let $(w_1,\ldots,w_{n-3})\in V(G)^{n-3}$ be the vertex to which it corresponds; that is, consider the subgraph $S_{(w_1,\ldots,w_{n-3})}$ of $S_G^n$ induced by $$\{(w_1,\ldots,w_{n-3},x,y,z):\, \textrm{ where }(x,y,z)\in V(G)^3\}.$$
If a vertex in $S_{(w_1,\ldots,w_{n-3})}$ is an extreme vertex of that copy of $S_G^3$, say a vertex $(w_1,\ldots,w_{n-3},x,x,x)$, which has a neighbor $(u_1,\ldots,u_{n-3},y,y,y)$ in another copy of $S_G^3$, namely $S_{(u_1,\ldots,u_{n-3})}$, and $x$ is the chosen endvertex of the edge $xy\in E(G)$, then $f_1$ colors all edges of $S_{(w_1,\ldots,w_{n-3})}$ incident with $(w_1,\ldots,w_{n-3},x,x,x)$ by color $k+1$. On the other hand, $f_1$ colors all other edges between vertices in $S_{(w_1,\ldots,w_{n-3})}$ with the same color as $f$ colors the corresponding edges in $S_G^3$. That is, $f_1(w_1,\ldots,w_{n-3},u,v,w)(w_1,\ldots,w_{n-3},x,y,z))=f((u,v,w)(x,y,z))$ for all such edges. 

To prove that $f_1$ is indeed an injective edge coloring of $S_G^n$, first consider the edges colored by color $k+1$. Note that the distance between any two extreme vertices in $S_G^3$ is at least $7$. Hence, if two edges that are in the same copy of $S_G^3$ receive the color $k+1$ by $f_1$, then either they have a common endvertex, which is an extreme vertex of that copy, or their distance is at least $5$ (since they are incident to distinct extreme vertices of that copy of $S_G^3$ in $S_G^n$). In the former case, they do not have a common edge, since $G$ is triangle-free. In the latter case, since their distance is at least $5$, they also do not have a common edge.  
Next, the distance between two edges colored with color $k+1$ that are from distinct copies of $S_G^3$ is at least $7$. Finally, the distance from an edge $e$ with color $k+1$ that lies between two distinct copies of $S_G^3$ in $S_G^n$ to any other edge with color $k+1$ is at least $6$, unless an edge with color $k+1$ is adjacent to $e$ (in which case they clearly do not have any common edges). 

Second, concerning the edges $e$ with $f_1(e)\le k$, we distinguish two cases. If two edges $e$ and $e'$ with $f_1(e)=f_1(e')$ belong to the same copy of $S_G^3$ in $S_G^n$, then since $f_1(e)=f(e)=f(e')=f_1(e')$, they do not have a common edge, since $f$ is an injective edge coloring of $S_G^3$. On the other hand, if $e$ and $e'$ are edges from distinct copies of $S_G^3$, and $f_1(e)=f_1(e')\le k$, then by the construction of $f_1$, they also cannot have a common edge. Indeed, on any path between $e$ and $e'$ there lie two edges colored with color $k+1$, hence $e$ and $e'$ do not have a common edge. Thus, $f_1$ is an injective edge coloring of $S_G^n$, which uses $k+1$ colors, so the upper bound $\chi_i'(S_G^n)\le \chi_i'(S_G^3)+1$ is proved. 

For the proof of {\it (i)}, assume that there exists a $\chi_i'$-coloring of $S_G^3$, where $\chi_i'(S_G^3)=k$, such that every vertex in $S_G^3$ incident to an edge with color $k$ is at distance at least $3$ from every extreme vertex $(u,u,u)$, $u\in V(G)$, of $S_G^3$. Then $f_1$ can be defined in almost the same way as in the previous (general) case, except that all edges that $f_1$ colored to $k+1$ in the previous case are recolored to $k$. Indeed, the condition that the distance from every extreme vertex $(u,u,u)$, $u\in V(G)$, of $S_G^3$, to a vertex incident with an edge with color $k$ is at least $3$ ensures that no two edges with color $k$ will have a common edge.

The proof of {\it (ii)} is straightforward. We define the coloring $f_1$ by 
$$f_1((w_1,\ldots,w_{n-3},u,v,w)(w_1,\ldots,w_{n-3},x,y,z))=f((u,v,w)(x,y,z))$$ for every edge in $S_G^n$ whose endvertices lie in the same copy of $S_G^3$ of $S_G^n$, where $f$ is a coloring of $S_G^3$ that satisfies the properties in the assumption of the statement. That is, in each copy of $S_G^3$ we use the coloring $f$. Now, to each edge between distinct copies of $S_G^3$ in $S_G^n$ let $f_1$ assign color $k$. Note that an edge between two vertices $(u_1,\ldots, u_n),(v_1,\ldots,v_n)\in V(S_G^n)$ that belong to distinct copies of $S_G^3$ exists only if $u_{n-2}=u_{n-1}=u_{n}=u$ and $v_{n-2}=v_{n-1}=v_{n}=v$ and $uv\in E(G)$. Since $G$ is bipartite, we may assume that $u\in A$ and $v\in B$. Note that all edges incident with $(u_1,\ldots, u_n)$ are colored with color $k$ by $f_1$, and since $G$ is triangle-free they do share a common edge. On the other hand, the distance from $(v_1,\ldots,v_n)$ to any vertex in the same copy of $S_G^3$, which is incident to an edge with color $k$, is at least $2$. Therefore, the edge $(u_1,\ldots, u_n)(v_1,\ldots,v_n)$ has no common edge to any edge with color $k$. 
The proof is complete.  
\end{proof}

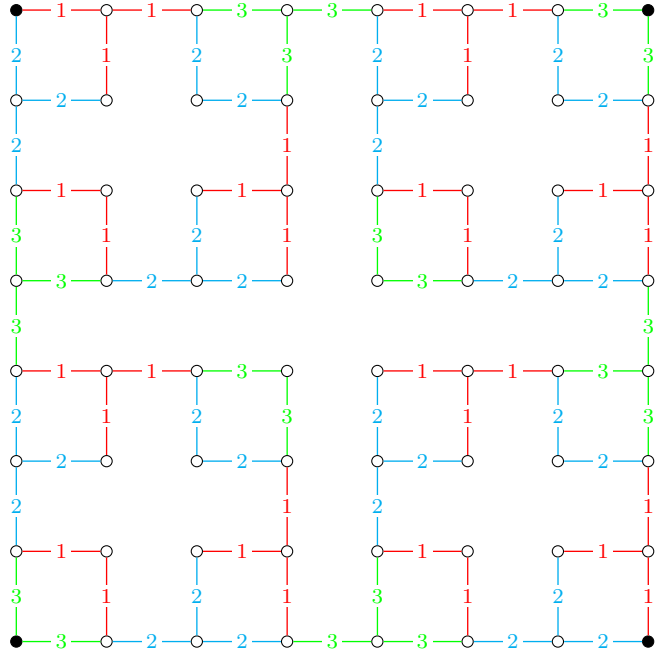
\begin{figure}[t]
\begin{center}
\begin{tikzpicture}[scale=1,inner sep=1.5pt]
\draw [] (0,0) node(1) [circle, draw, fill] {}
         (1,0) node(2) [circle, draw] {}
         (2,0) node(3) [circle, draw] {}
         (3,0) node(4) [circle, draw] {}
         (4,0) node(5) [circle, draw] {}
         (5,0) node(6) [circle, draw] {}
         (6,0) node(7) [circle, draw] {}
         (7,0) node(8) [circle, draw, fill] {}
         
         (0,1) node(9) [circle, draw] {}
         (1,1) node(10) [circle, draw] {}
         (2,1) node(11) [circle, draw] {}
         (3,1) node(12) [circle, draw] {}
         (4,1) node(13) [circle, draw] {}
         (5,1) node(14) [circle, draw] {}
         (6,1) node(15) [circle, draw] {}
         (7,1) node(16) [circle, draw] {}
         
         (0,2) node(17) [circle, draw] {}
         (1,2) node(18) [circle, draw] {}
         (2,2) node(19) [circle, draw] {}
         (3,2) node(20) [circle, draw] {}
         (4,2) node(21) [circle, draw] {}
         (5,2) node(22) [circle, draw] {}
         (6,2) node(23) [circle, draw] {}
         (7,2) node(24) [circle, draw] {}  

         (0,3) node(25) [circle, draw] {}
         (1,3) node(26) [circle, draw] {}
         (2,3) node(27) [circle, draw] {}
         (3,3) node(28) [circle, draw] {}
         (4,3) node(29) [circle, draw] {}
         (5,3) node(30) [circle, draw] {}
         (6,3) node(31) [circle, draw] {}
         (7,3) node(32) [circle, draw] {} 

         (0,4) node(33) [circle, draw] {}
         (1,4) node(34) [circle, draw] {}
         (2,4) node(35) [circle, draw] {}
         (3,4) node(36) [circle, draw] {}
         (4,4) node(37) [circle, draw] {}
         (5,4) node(38) [circle, draw] {}
         (6,4) node(39) [circle, draw] {}
         (7,4) node(40) [circle, draw] {} 

         (0,5) node(41) [circle, draw] {}
         (1,5) node(42) [circle, draw] {}
         (2,5) node(43) [circle, draw] {}
         (3,5) node(44) [circle, draw] {}
         (4,5) node(45) [circle, draw] {}
         (5,5) node(46) [circle, draw] {}
         (6,5) node(47) [circle, draw] {}
         (7,5) node(48) [circle, draw] {} 

         (0,6) node(49) [circle, draw] {}
         (1,6) node(50) [circle, draw] {}
         (2,6) node(51) [circle, draw] {}
         (3,6) node(52) [circle, draw] {}
         (4,6) node(53) [circle, draw] {}
         (5,6) node(54) [circle, draw] {}
         (6,6) node(55) [circle, draw] {}
         (7,6) node(56) [circle, draw] {} 

         (0,7) node(57) [circle, draw, fill] {}
         (1,7) node(58) [circle, draw] {}
         (2,7) node(59) [circle, draw] {}
         (3,7) node(60) [circle, draw] {}
         (4,7) node(61) [circle, draw] {}
         (5,7) node(62) [circle, draw] {}
         (6,7) node(63) [circle, draw] {}
         (7,7) node(64) [circle, draw, fill] {} ;

    \Edge[color=green,lw=0.5pt,label={{\fontsize{8pt}{8pt}\selectfont \textcolor{green}{3}}}](1)(2)  
    \Edge[color=cyan,lw=0.5pt,label={{\fontsize{8pt}{8pt}\selectfont \textcolor{cyan}{2}}}](2)(3)
    \Edge[color=cyan,lw=0.5pt,label={{\fontsize{8pt}{8pt}\selectfont \textcolor{cyan}{2}}}](3)(4)
    \Edge[color=green,lw=0.5pt,label={{\fontsize{8pt}{8pt}\selectfont \textcolor{green}{3}}}](4)(5)
    \Edge[color=green,lw=0.5pt,label={{\fontsize{8pt}{8pt}\selectfont \textcolor{green}{3}}}](5)(6)
    \Edge[color=cyan,lw=0.5pt,label={{\fontsize{8pt}{8pt}\selectfont \textcolor{cyan}{2}}}](6)(7)
    \Edge[color=cyan,lw=0.5pt,label={{\fontsize{8pt}{8pt}\selectfont \textcolor{cyan}{2}}}](7)(8)

    \Edge[color=red,lw=0.5pt,label={{\fontsize{8pt}{8pt}\selectfont \textcolor{red}{1}}}](9)(10)
    \Edge[color=red,lw=0.5pt,label={{\fontsize{8pt}{8pt}\selectfont \textcolor{red}{1}}}](11)(12)
    \Edge[color=red,lw=0.5pt,label={{\fontsize{8pt}{8pt}\selectfont \textcolor{red}{1}}}](13)(14)
    \Edge[color=red,lw=0.5pt,label={{\fontsize{8pt}{8pt}\selectfont \textcolor{red}{1}}}](15)(16)

    \Edge[color=cyan,lw=0.5pt,label={{\fontsize{8pt}{8pt}\selectfont \textcolor{cyan}{2}}}](17)(18)
    \Edge[color=cyan,lw=0.5pt,label={{\fontsize{8pt}{8pt}\selectfont \textcolor{cyan}{2}}}](19)(20)
    \Edge[color=cyan,lw=0.5pt,label={{\fontsize{8pt}{8pt}\selectfont \textcolor{cyan}{2}}}](21)(22)
    \Edge[color=cyan,lw=0.5pt,label={{\fontsize{8pt}{8pt}\selectfont \textcolor{cyan}{2}}}](23)(24) 
    
    \Edge[color=red,lw=0.5pt,label={{\fontsize{8pt}{8pt}\selectfont \textcolor{red}{1}}}](25)(26)
    \Edge[color=red,lw=0.5pt,label={{\fontsize{8pt}{8pt}\selectfont \textcolor{red}{1}}}](26)(27)
    \Edge[color=green,lw=0.5pt,label={{\fontsize{8pt}{8pt}\selectfont \textcolor{green}{3}}}](27)(28)
    \Edge[color=red,lw=0.5pt,label={{\fontsize{8pt}{8pt}\selectfont \textcolor{red}{1}}}](29)(30)
    \Edge[color=red,lw=0.5pt,label={{\fontsize{8pt}{8pt}\selectfont \textcolor{red}{1}}}](30)(31) 
    \Edge[color=green,lw=0.5pt,label={{\fontsize{8pt}{8pt}\selectfont \textcolor{green}{3}}}](31)(32)

    \Edge[color=green,lw=0.5pt,label={{\fontsize{8pt}{8pt}\selectfont \textcolor{green}{3}}}](33)(34)
    \Edge[color=cyan,lw=0.5pt,label={{\fontsize{8pt}{8pt}\selectfont \textcolor{cyan}{2}}}](34)(35)
    \Edge[color=cyan,lw=0.5pt,label={{\fontsize{8pt}{8pt}\selectfont \textcolor{cyan}{2}}}](35)(36)
    \Edge[color=green,lw=0.5pt,label={{\fontsize{8pt}{8pt}\selectfont \textcolor{green}{3}}}](37)(38) 
    \Edge[color=cyan,lw=0.5pt,label={{\fontsize{8pt}{8pt}\selectfont \textcolor{cyan}{2}}}](38)(39)
    \Edge[color=cyan,lw=0.5pt,label={{\fontsize{8pt}{8pt}\selectfont \textcolor{cyan}{2}}}](39)(40) 

    \Edge[color=red,lw=0.5pt,label={{\fontsize{8pt}{8pt}\selectfont \textcolor{red}{1}}}](41)(42)
    \Edge[color=red,lw=0.5pt,label={{\fontsize{8pt}{8pt}\selectfont \textcolor{red}{1}}}](43)(44)
    \Edge[color=red,lw=0.5pt,label={{\fontsize{8pt}{8pt}\selectfont \textcolor{red}{1}}}](45)(46)
    \Edge[color=red,lw=0.5pt,label={{\fontsize{8pt}{8pt}\selectfont \textcolor{red}{1}}}](47)(48) 
    
    \Edge[color=cyan,lw=0.5pt,label={{\fontsize{8pt}{8pt}\selectfont \textcolor{cyan}{2}}}](49)(50)
    \Edge[color=cyan,lw=0.5pt,label={{\fontsize{8pt}{8pt}\selectfont \textcolor{cyan}{2}}}](51)(52)
    \Edge[color=cyan,lw=0.5pt,label={{\fontsize{8pt}{8pt}\selectfont \textcolor{cyan}{2}}}](53)(54)
    \Edge[color=cyan,lw=0.5pt,label={{\fontsize{8pt}{8pt}\selectfont \textcolor{cyan}{2}}}](55)(56) 

    \Edge[color=red,lw=0.5pt,label={{\fontsize{8pt}{8pt}\selectfont \textcolor{red}{1}}}](57)(58) 
    \Edge[color=red,lw=0.5pt,label={{\fontsize{8pt}{8pt}\selectfont \textcolor{red}{1}}}](58)(59)
    \Edge[color=green,lw=0.5pt,label={{\fontsize{8pt}{8pt}\selectfont \textcolor{green}{3}}}](59)(60)
    \Edge[color=green,lw=0.5pt,label={{\fontsize{8pt}{8pt}\selectfont \textcolor{green}{3}}}](60)(61)
    \Edge[color=red,lw=0.5pt,label={{\fontsize{8pt}{8pt}\selectfont \textcolor{red}{1}}}](61)(62)
    \Edge[color=red,lw=0.5pt,label={{\fontsize{8pt}{8pt}\selectfont \textcolor{red}{1}}}](62)(63)
    \Edge[color=green,lw=0.5pt,label={{\fontsize{8pt}{8pt}\selectfont \textcolor{green}{3}}}](63)(64)

    \Edge[color=green,lw=0.5pt,label={{\fontsize{8pt}{8pt}\selectfont \textcolor{green}{3}}}](1)(9)  
    \Edge[color=cyan,lw=0.5pt,label={{\fontsize{8pt}{8pt}\selectfont \textcolor{cyan}{2}}}](9)(17)
    \Edge[color=cyan,lw=0.5pt,label={{\fontsize{8pt}{8pt}\selectfont \textcolor{cyan}{2}}}](17)(25)
    \Edge[color=green,lw=0.5pt,label={{\fontsize{8pt}{8pt}\selectfont \textcolor{green}{3}}}](25)(33)
    \Edge[color=green,lw=0.5pt,label={{\fontsize{8pt}{8pt}\selectfont \textcolor{green}{3}}}](33)(41)
    \Edge[color=cyan,lw=0.5pt,label={{\fontsize{8pt}{8pt}\selectfont \textcolor{cyan}{2}}}](41)(49)
    \Edge[color=cyan,lw=0.5pt,label={{\fontsize{8pt}{8pt}\selectfont \textcolor{cyan}{2}}}](49)(57)

    \Edge[color=red,lw=0.5pt,label={{\fontsize{8pt}{8pt}\selectfont \textcolor{red}{1}}}](2)(10)
    \Edge[color=red,lw=0.5pt,label={{\fontsize{8pt}{8pt}\selectfont \textcolor{red}{1}}}](18)(26)
    \Edge[color=red,lw=0.5pt,label={{\fontsize{8pt}{8pt}\selectfont \textcolor{red}{1}}}](34)(42)
    \Edge[color=red,lw=0.5pt,label={{\fontsize{8pt}{8pt}\selectfont \textcolor{red}{1}}}](50)(58) 

    \Edge[color=cyan,lw=0.5pt,label={{\fontsize{8pt}{8pt}\selectfont \textcolor{cyan}{2}}}](3)(11)
    \Edge[color=cyan,lw=0.5pt,label={{\fontsize{8pt}{8pt}\selectfont \textcolor{cyan}{2}}}](19)(27)
    \Edge[color=cyan,lw=0.5pt,label={{\fontsize{8pt}{8pt}\selectfont \textcolor{cyan}{2}}}](35)(43)
    \Edge[color=cyan,lw=0.5pt,label={{\fontsize{8pt}{8pt}\selectfont \textcolor{cyan}{2}}}](51)(59) 

    \Edge[color=red,lw=0.5pt,label={{\fontsize{8pt}{8pt}\selectfont \textcolor{red}{1}}}](4)(12)  
    \Edge[color=red,lw=0.5pt,label={{\fontsize{8pt}{8pt}\selectfont \textcolor{red}{1}}}](12)(20)
    \Edge[color=green,lw=0.5pt,label={{\fontsize{8pt}{8pt}\selectfont \textcolor{green}{3}}}](20)(28)
    \Edge[color=red,lw=0.5pt,label={{\fontsize{8pt}{8pt}\selectfont \textcolor{red}{1}}}](36)(44)
    \Edge[color=red,lw=0.5pt,label={{\fontsize{8pt}{8pt}\selectfont \textcolor{red}{1}}}](44)(52)
    \Edge[color=green,lw=0.5pt,label={{\fontsize{8pt}{8pt}\selectfont \textcolor{green}{3}}}](52)(60)

    \Edge[color=green,lw=0.5pt,label={{\fontsize{8pt}{8pt}\selectfont \textcolor{green}{3}}}](5)(13)  
    \Edge[color=cyan,lw=0.5pt,label={{\fontsize{8pt}{8pt}\selectfont \textcolor{cyan}{2}}}](13)(21)
    \Edge[color=cyan,lw=0.5pt,label={{\fontsize{8pt}{8pt}\selectfont \textcolor{cyan}{2}}}](21)(29)
    \Edge[color=green,lw=0.5pt,label={{\fontsize{8pt}{8pt}\selectfont \textcolor{green}{3}}}](37)(45)
    \Edge[color=cyan,lw=0.5pt,label={{\fontsize{8pt}{8pt}\selectfont \textcolor{cyan}{2}}}](45)(53)
    \Edge[color=cyan,lw=0.5pt,label={{\fontsize{8pt}{8pt}\selectfont \textcolor{cyan}{2}}}](53)(61) 
    
    \Edge[color=red,lw=0.5pt,label={{\fontsize{8pt}{8pt}\selectfont \textcolor{red}{1}}}](6)(14)
    \Edge[color=red,lw=0.5pt,label={{\fontsize{8pt}{8pt}\selectfont \textcolor{red}{1}}}](22)(30)
    \Edge[color=red,lw=0.5pt,label={{\fontsize{8pt}{8pt}\selectfont \textcolor{red}{1}}}](38)(46)
    \Edge[color=red,lw=0.5pt,label={{\fontsize{8pt}{8pt}\selectfont \textcolor{red}{1}}}](54)(62) 

    \Edge[color=cyan,lw=0.5pt,label={{\fontsize{8pt}{8pt}\selectfont \textcolor{cyan}{2}}}](7)(15)
    \Edge[color=cyan,lw=0.5pt,label={{\fontsize{8pt}{8pt}\selectfont \textcolor{cyan}{2}}}](23)(31)
    \Edge[color=cyan,lw=0.5pt,label={{\fontsize{8pt}{8pt}\selectfont \textcolor{cyan}{2}}}](39)(47)
    \Edge[color=cyan,lw=0.5pt,label={{\fontsize{8pt}{8pt}\selectfont \textcolor{cyan}{2}}}](55)(63) 

    \Edge[color=red,lw=0.5pt,label={{\fontsize{8pt}{8pt}\selectfont \textcolor{red}{1}}}](8)(16)  
    \Edge[color=red,lw=0.5pt,label={{\fontsize{8pt}{8pt}\selectfont \textcolor{red}{1}}}](16)(24)
    \Edge[color=green,lw=0.5pt,label={{\fontsize{8pt}{8pt}\selectfont \textcolor{green}{3}}}](24)(32)
    \Edge[color=green,lw=0.5pt,label={{\fontsize{8pt}{8pt}\selectfont \textcolor{green}{3}}}](32)(40)
    \Edge[color=red,lw=0.5pt,label={{\fontsize{8pt}{8pt}\selectfont \textcolor{red}{1}}}](40)(48)
    \Edge[color=red,lw=0.5pt,label={{\fontsize{8pt}{8pt}\selectfont \textcolor{red}{1}}}](48)(56)
    \Edge[color=green,lw=0.5pt,label={{\fontsize{8pt}{8pt}\selectfont \textcolor{green}{3}}}](56)(64)    
\end{tikzpicture}
\caption{A $\chi_i'$-coloring of $S_{C_4}^3$ with $3$ colors, which leads to the formula $\chi_i'(S_{C_4}^n)=3$ for all $n\ge 2$. The  extreme vertices of $S_3^n$ are shaded.}
\label{fig:SC_4}
\end{center}
\end{figure}

\begin{figure}[H]
    \centering
    \begin{tikzpicture}[scale=1,inner sep=1pt]

\def\r{0.6}
\def\R{1.8}

\newcommand{\smallpentagon}[4]{
    \foreach \i/\angle in {1/90,2/18,3/-54,4/-126,5/162}{
        \pgfmathtruncatemacro{\labelnum}{#4 + \i}
        \node[circle,draw,inner sep=1.5pt] (#3\labelnum) at ({#1+\r*cos(\angle)}, {#2+\r*sin(\angle)}) {};
    }
}

\newcommand{\bigpentagon}[3]{
    \smallpentagon{#1+\R*cos(90)}{#2+\R*sin(90)}{#3}{0}     
    \smallpentagon{#1+\R*cos(18)}{#2+\R*sin(18)}{#3}{5}     
    \smallpentagon{#1+\R*cos(-54)}{#2+\R*sin(-54)}{#3}{10}  
    \smallpentagon{#1+\R*cos(-126)}{#2+\R*sin(-126)}{#3}{15}
    \smallpentagon{#1+\R*cos(162)}{#2+\R*sin(162)}{#3}{20}  
}

\def\RR{6} 
\bigpentagon{\RR*cos(90)}{\RR*sin(90)}{a}
\bigpentagon{\RR*cos(18)}{\RR*sin(18)}{b}
\bigpentagon{\RR*cos(-54)}{\RR*sin(-54)}{c}
\bigpentagon{\RR*cos(-126)}{\RR*sin(-126)}{d}
\bigpentagon{\RR*cos(162)}{\RR*sin(162)}{v}

\Edge[color=cyan,lw=0.5pt,label={{\fontsize{8pt}{8pt}\selectfont \textcolor{cyan}{2}}}](a1)(a2)
\Edge[color=cyan,lw=0.5pt,label={{\fontsize{8pt}{8pt}\selectfont \textcolor{cyan}{2}}}](a2)(a6)
\Edge[color=cyan,lw=0.5pt,label={{\fontsize{8pt}{8pt}\selectfont \textcolor{cyan}{2}}}](a2)(a3)
\Edge[color=cyan,lw=0.5pt,label={{\fontsize{8pt}{8pt}\selectfont \textcolor{cyan}{2}}}](a12)(a13)
\Edge[color=cyan,lw=0.5pt,label={{\fontsize{8pt}{8pt}\selectfont \textcolor{cyan}{2}}}](a13)(a14)
\Edge[color=cyan,lw=0.5pt,label={{\fontsize{8pt}{8pt}\selectfont \textcolor{cyan}{2}}}](a25)(a21)
\Edge[color=cyan,lw=0.5pt,label={{\fontsize{8pt}{8pt}\selectfont \textcolor{cyan}{2}}}](a25)(a24)
\Edge[color=cyan,lw=0.5pt,label={{\fontsize{8pt}{8pt}\selectfont \textcolor{cyan}{2}}}](a25)(v1)
\Edge[color=cyan,lw=0.5pt,label={{\fontsize{8pt}{8pt}\selectfont \textcolor{cyan}{2}}}](v6)(v7)
\Edge[color=cyan,lw=0.5pt,label={{\fontsize{8pt}{8pt}\selectfont \textcolor{cyan}{2}}}](v21)(v25)
\Edge[color=cyan,lw=0.5pt,label={{\fontsize{8pt}{8pt}\selectfont \textcolor{cyan}{2}}}](v25)(v24)
\Edge[color=cyan,lw=0.5pt,label={{\fontsize{8pt}{8pt}\selectfont \textcolor{cyan}{2}}}](v25)(v24)
\Edge[color=cyan,lw=0.5pt,label={{\fontsize{8pt}{8pt}\selectfont \textcolor{cyan}{2}}}](v14)(v13)
\Edge[color=cyan,lw=0.5pt,label={{\fontsize{8pt}{8pt}\selectfont \textcolor{cyan}{2}}}](v16)(v17)
\Edge[color=cyan,lw=0.5pt,label={{\fontsize{8pt}{8pt}\selectfont \textcolor{cyan}{2}}}](v19)(d25)
\Edge[color=cyan,lw=0.5pt,label={{\fontsize{8pt}{8pt}\selectfont \textcolor{cyan}{2}}}](d21)(d25)
\Edge[color=cyan,lw=0.5pt,label={{\fontsize{8pt}{8pt}\selectfont \textcolor{cyan}{2}}}](d24)(d25)
\Edge[color=cyan,lw=0.5pt,label={{\fontsize{8pt}{8pt}\selectfont \textcolor{cyan}{2}}}](d19)(d18)
\Edge[color=cyan,lw=0.5pt,label={{\fontsize{8pt}{8pt}\selectfont \textcolor{cyan}{2}}}](d18)(d14)
\Edge[color=cyan,lw=0.5pt,label={{\fontsize{8pt}{8pt}\selectfont \textcolor{cyan}{2}}}](d18)(d17)
\Edge[color=cyan,lw=0.5pt,label={{\fontsize{8pt}{8pt}\selectfont \textcolor{cyan}{2}}}](d11)(d12)
\Edge[color=cyan,lw=0.5pt,label={{\fontsize{8pt}{8pt}\selectfont \textcolor{cyan}{2}}}](d12)(d8)
\Edge[color=cyan,lw=0.5pt,label={{\fontsize{8pt}{8pt}\selectfont \textcolor{cyan}{2}}}](d1)(d2)
\Edge[color=cyan,lw=0.5pt,label={{\fontsize{8pt}{8pt}\selectfont \textcolor{cyan}{2}}}](d2)(d6)
\Edge[color=cyan,lw=0.5pt,label={{\fontsize{8pt}{8pt}\selectfont \textcolor{cyan}{2}}}](c11)(c15)
\Edge[color=cyan,lw=0.5pt,label={{\fontsize{8pt}{8pt}\selectfont \textcolor{cyan}{2}}}](c20)(c16)
\Edge[color=cyan,lw=0.5pt,label={{\fontsize{8pt}{8pt}\selectfont \textcolor{cyan}{2}}}](c16)(c17)
\Edge[color=cyan,lw=0.5pt,label={{\fontsize{8pt}{8pt}\selectfont \textcolor{cyan}{2}}}](c25)(c21)
\Edge[color=cyan,lw=0.5pt,label={{\fontsize{8pt}{8pt}\selectfont \textcolor{cyan}{2}}}](c21)(c5)
\Edge[color=cyan,lw=0.5pt,label={{\fontsize{8pt}{8pt}\selectfont \textcolor{cyan}{2}}}](c6)(c7)
\Edge[color=cyan,lw=0.5pt,label={{\fontsize{8pt}{8pt}\selectfont \textcolor{cyan}{2}}}](c8)(c7)
\Edge[color=cyan,lw=0.5pt,label={{\fontsize{8pt}{8pt}\selectfont \textcolor{cyan}{2}}}](c21)(c22)
\Edge[color=cyan,lw=0.5pt,label={{\fontsize{8pt}{8pt}\selectfont \textcolor{cyan}{2}}}](b19)(b20)
\Edge[color=cyan,lw=0.5pt,label={{\fontsize{8pt}{8pt}\selectfont \textcolor{cyan}{2}}}](b16)(b20)
\Edge[color=cyan,lw=0.5pt,label={{\fontsize{8pt}{8pt}\selectfont \textcolor{cyan}{2}}}](b19)(b20)
\Edge[color=cyan,lw=0.5pt,label={{\fontsize{8pt}{8pt}\selectfont \textcolor{cyan}{2}}}](b24)(b20)
\Edge[color=cyan,lw=0.5pt,label={{\fontsize{8pt}{8pt}\selectfont \textcolor{cyan}{2}}}](b1)(b2)
\Edge[color=cyan,lw=0.5pt,label={{\fontsize{8pt}{8pt}\selectfont \textcolor{cyan}{2}}}](b6)(b2)
\Edge[color=cyan,lw=0.5pt,label={{\fontsize{8pt}{8pt}\selectfont \textcolor{cyan}{2}}}](b3)(b2)
\Edge[color=cyan,lw=0.5pt,label={{\fontsize{8pt}{8pt}\selectfont \textcolor{cyan}{2}}}](b9)(b8)
\Edge[color=cyan,lw=0.5pt,label={{\fontsize{8pt}{8pt}\selectfont \textcolor{cyan}{2}}}](b8)(b12)

\Edge[color=red,lw=0.5pt,label={{\fontsize{8pt}{8pt}\selectfont \textcolor{red}{1}}}](a1)(a5)
\Edge[color=red,lw=0.5pt,label={{\fontsize{8pt}{8pt}\selectfont \textcolor{red}{1}}}](a21)(a5)
\Edge[color=red,lw=0.5pt,label={{\fontsize{8pt}{8pt}\selectfont \textcolor{red}{1}}}](a4)(a5)
\Edge[color=red,lw=0.5pt,label={{\fontsize{8pt}{8pt}\selectfont \textcolor{red}{1}}}](a16)(a17)
\Edge[color=red,lw=0.5pt,label={{\fontsize{8pt}{8pt}\selectfont \textcolor{red}{1}}}](a17)(a18)
\Edge[color=red,lw=0.5pt,label={{\fontsize{8pt}{8pt}\selectfont \textcolor{red}{1}}}](a11)(a15)
\Edge[color=red,lw=0.5pt,label={{\fontsize{8pt}{8pt}\selectfont \textcolor{red}{1}}}](a9)(a10)
\Edge[color=red,lw=0.5pt,label={{\fontsize{8pt}{8pt}\selectfont \textcolor{red}{1}}}](a10)(a6)
\Edge[color=red,lw=0.5pt,label={{\fontsize{8pt}{8pt}\selectfont \textcolor{red}{1}}}](a11)(a15)
\Edge[color=red,lw=0.5pt,label={{\fontsize{8pt}{8pt}\selectfont \textcolor{red}{1}}}](a11)(a12)
\Edge[color=red,lw=0.5pt,label={{\fontsize{8pt}{8pt}\selectfont \textcolor{red}{1}}}](b1)(b5)
\Edge[color=red,lw=0.5pt,label={{\fontsize{8pt}{8pt}\selectfont \textcolor{red}{1}}}](b5)(b21)
\Edge[color=red,lw=0.5pt,label={{\fontsize{8pt}{8pt}\selectfont \textcolor{red}{1}}}](b5)(b4)
\Edge[color=red,lw=0.5pt,label={{\fontsize{8pt}{8pt}\selectfont \textcolor{red}{1}}}](b24)(b23)
\Edge[color=red,lw=0.5pt,label={{\fontsize{8pt}{8pt}\selectfont \textcolor{red}{1}}}](b19)(b18)
\Edge[color=red,lw=0.5pt,label={{\fontsize{8pt}{8pt}\selectfont \textcolor{red}{1}}}](b17)(b18)
\Edge[color=red,lw=0.5pt,label={{\fontsize{8pt}{8pt}\selectfont \textcolor{red}{1}}}](b11)(b12)
\Edge[color=red,lw=0.5pt,label={{\fontsize{8pt}{8pt}\selectfont \textcolor{red}{1}}}](b13)(b12)
\Edge[color=red,lw=0.5pt,label={{\fontsize{8pt}{8pt}\selectfont \textcolor{red}{1}}}](b9)(b10)
\Edge[color=red,lw=0.5pt,label={{\fontsize{8pt}{8pt}\selectfont \textcolor{red}{1}}}](b10)(b6)
\Edge[color=red,lw=0.5pt,label={{\fontsize{8pt}{8pt}\selectfont \textcolor{red}{1}}}](c5)(c4)
\Edge[color=red,lw=0.5pt,label={{\fontsize{8pt}{8pt}\selectfont \textcolor{red}{1}}}](c2)(c6)
\Edge[color=red,lw=0.5pt,label={{\fontsize{8pt}{8pt}\selectfont \textcolor{red}{1}}}](c6)(c10)
\Edge[color=red,lw=0.5pt,label={{\fontsize{8pt}{8pt}\selectfont \textcolor{red}{1}}}](c8)(c12)
\Edge[color=red,lw=0.5pt,label={{\fontsize{8pt}{8pt}\selectfont \textcolor{red}{1}}}](c12)(c11)
\Edge[color=red,lw=0.5pt,label={{\fontsize{8pt}{8pt}\selectfont \textcolor{red}{1}}}](c12)(c13)
\Edge[color=red,lw=0.5pt,label={{\fontsize{8pt}{8pt}\selectfont \textcolor{red}{1}}}](c17)(c18)
\Edge[color=red,lw=0.5pt,label={{\fontsize{8pt}{8pt}\selectfont \textcolor{red}{1}}}](c18)(c19)
\Edge[color=red,lw=0.5pt,label={{\fontsize{8pt}{8pt}\selectfont \textcolor{red}{1}}}](c22)(c23)
\Edge[color=red,lw=0.5pt,label={{\fontsize{8pt}{8pt}\selectfont \textcolor{red}{1}}}](v21)(v5)
\Edge[color=red,lw=0.5pt,label={{\fontsize{8pt}{8pt}\selectfont \textcolor{red}{1}}}](v5)(v1)
\Edge[color=red,lw=0.5pt,label={{\fontsize{8pt}{8pt}\selectfont \textcolor{red}{1}}}](v4)(v5)
\Edge[color=red,lw=0.5pt,label={{\fontsize{8pt}{8pt}\selectfont \textcolor{red}{1}}}](v24)(v23)
\Edge[color=red,lw=0.5pt,label={{\fontsize{8pt}{8pt}\selectfont \textcolor{red}{1}}}](v20)(v24)
\Edge[color=red,lw=0.5pt,label={{\fontsize{8pt}{8pt}\selectfont \textcolor{red}{1}}}](v14)(v15)
\Edge[color=red,lw=0.5pt,label={{\fontsize{8pt}{8pt}\selectfont \textcolor{red}{1}}}](v15)(v11)
\Edge[color=red,lw=0.5pt,label={{\fontsize{8pt}{8pt}\selectfont \textcolor{red}{1}}}](v9)(v8)
\Edge[color=red,lw=0.5pt,label={{\fontsize{8pt}{8pt}\selectfont \textcolor{red}{1}}}](v7)(v8)
\Edge[color=red,lw=0.5pt,label={{\fontsize{8pt}{8pt}\selectfont \textcolor{red}{1}}}](d24)(d20)
\Edge[color=red,lw=0.5pt,label={{\fontsize{8pt}{8pt}\selectfont \textcolor{red}{1}}}](d20)(d16)
\Edge[color=red,lw=0.5pt,label={{\fontsize{8pt}{8pt}\selectfont \textcolor{red}{1}}}](d20)(d19)
\Edge[color=red,lw=0.5pt,label={{\fontsize{8pt}{8pt}\selectfont \textcolor{red}{1}}}](d14)(d15)
\Edge[color=red,lw=0.5pt,label={{\fontsize{8pt}{8pt}\selectfont \textcolor{red}{1}}}](d9)(d8)
\Edge[color=red,lw=0.5pt,label={{\fontsize{8pt}{8pt}\selectfont \textcolor{red}{1}}}](d8)(d7)
\Edge[color=red,lw=0.5pt,label={{\fontsize{8pt}{8pt}\selectfont \textcolor{red}{1}}}](d4)(d3)
\Edge[color=red,lw=0.5pt,label={{\fontsize{8pt}{8pt}\selectfont \textcolor{red}{1}}}](d3)(d2)
\Edge[color=red,lw=0.5pt,label={{\fontsize{8pt}{8pt}\selectfont \textcolor{red}{1}}}](a24)(a23)

\Edge[color=green,lw=0.5pt,label={{\fontsize{8pt}{8pt}\selectfont \textcolor{green}{3}}}](a7)(b1)
\Edge[color=green,lw=0.5pt,label={{\fontsize{8pt}{8pt}\selectfont \textcolor{green}{3}}}](c1)(c2)
\Edge[color=green,lw=0.5pt,label={{\fontsize{8pt}{8pt}\selectfont \textcolor{green}{3}}}](c1)(c5)
\Edge[color=green,lw=0.5pt,label={{\fontsize{8pt}{8pt}\selectfont \textcolor{green}{3}}}](c10)(c9)
\Edge[color=green,lw=0.5pt,label={{\fontsize{8pt}{8pt}\selectfont \textcolor{green}{3}}}](c9)(c8)
\Edge[color=green,lw=0.5pt,label={{\fontsize{8pt}{8pt}\selectfont \textcolor{green}{3}}}](c15)(c14)
\Edge[color=green,lw=0.5pt,label={{\fontsize{8pt}{8pt}\selectfont \textcolor{green}{3}}}](c14)(c13)
\Edge[color=green,lw=0.5pt,label={{\fontsize{8pt}{8pt}\selectfont \textcolor{green}{3}}}](c24)(c23)
\Edge[color=green,lw=0.5pt,label={{\fontsize{8pt}{8pt}\selectfont \textcolor{green}{3}}}](c24)(c25)
\Edge[color=green,lw=0.5pt,label={{\fontsize{8pt}{8pt}\selectfont \textcolor{green}{3}}}](c24)(c20)
\Edge[color=green,lw=0.5pt,label={{\fontsize{8pt}{8pt}\selectfont \textcolor{green}{3}}}](c18)(c14)
\Edge[color=green,lw=0.5pt,label={{\fontsize{8pt}{8pt}\selectfont \textcolor{green}{3}}}](d1)(d5)
\Edge[color=green,lw=0.5pt,label={{\fontsize{8pt}{8pt}\selectfont \textcolor{green}{3}}}](d4)(d5)
\Edge[color=green,lw=0.5pt,label={{\fontsize{8pt}{8pt}\selectfont \textcolor{green}{3}}}](d22)(d23)
\Edge[color=green,lw=0.5pt,label={{\fontsize{8pt}{8pt}\selectfont \textcolor{green}{3}}}](d23)(d24)
\Edge[color=green,lw=0.5pt,label={{\fontsize{8pt}{8pt}\selectfont \textcolor{green}{3}}}](d16)(d17)
\Edge[color=green,lw=0.5pt,label={{\fontsize{8pt}{8pt}\selectfont \textcolor{green}{3}}}](d14)(d13)
\Edge[color=green,lw=0.5pt,label={{\fontsize{8pt}{8pt}\selectfont \textcolor{green}{3}}}](d13)(d12)
\Edge[color=green,lw=0.5pt,label={{\fontsize{8pt}{8pt}\selectfont \textcolor{green}{3}}}](d10)(d9)
\Edge[color=green,lw=0.5pt,label={{\fontsize{8pt}{8pt}\selectfont \textcolor{green}{3}}}](v20)(v19)
\Edge[color=green,lw=0.5pt,label={{\fontsize{8pt}{8pt}\selectfont \textcolor{green}{3}}}](v20)(v16)
\Edge[color=green,lw=0.5pt,label={{\fontsize{8pt}{8pt}\selectfont \textcolor{green}{3}}}](v13)(v12)
\Edge[color=green,lw=0.5pt,label={{\fontsize{8pt}{8pt}\selectfont \textcolor{green}{3}}}](v12)(v11)
\Edge[color=green,lw=0.5pt,label={{\fontsize{8pt}{8pt}\selectfont \textcolor{green}{3}}}](v12)(v8)
\Edge[color=green,lw=0.5pt,label={{\fontsize{8pt}{8pt}\selectfont \textcolor{green}{3}}}](v2)(v1)
\Edge[color=green,lw=0.5pt,label={{\fontsize{8pt}{8pt}\selectfont \textcolor{green}{3}}}](v2)(v3)
\Edge[color=green,lw=0.5pt,label={{\fontsize{8pt}{8pt}\selectfont \textcolor{green}{3}}}](v2)(v6)
\Edge[color=green,lw=0.5pt,label={{\fontsize{8pt}{8pt}\selectfont \textcolor{green}{3}}}](v21)(v22)
\Edge[color=green,lw=0.5pt,label={{\fontsize{8pt}{8pt}\selectfont \textcolor{green}{3}}}](v22)(v23)
\Edge[color=green,lw=0.5pt,label={{\fontsize{8pt}{8pt}\selectfont \textcolor{green}{3}}}](a21)(a22)
\Edge[color=green,lw=0.5pt,label={{\fontsize{8pt}{8pt}\selectfont \textcolor{green}{3}}}](a22)(a23)
\Edge[color=green,lw=0.5pt,label={{\fontsize{8pt}{8pt}\selectfont \textcolor{green}{3}}}](a18)(a19)
\Edge[color=green,lw=0.5pt,label={{\fontsize{8pt}{8pt}\selectfont \textcolor{green}{3}}}](a14)(a18)
\Edge[color=green,lw=0.5pt,label={{\fontsize{8pt}{8pt}\selectfont \textcolor{green}{3}}}](a6)(a7)
\Edge[color=green,lw=0.5pt,label={{\fontsize{8pt}{8pt}\selectfont \textcolor{green}{3}}}](a7)(a8)
\Edge[color=green,lw=0.5pt,label={{\fontsize{8pt}{8pt}\selectfont \textcolor{green}{3}}}](a4)(a3)
\Edge[color=green,lw=0.5pt,label={{\fontsize{8pt}{8pt}\selectfont \textcolor{green}{3}}}](b3)(b4)
\Edge[color=green,lw=0.5pt,label={{\fontsize{8pt}{8pt}\selectfont \textcolor{green}{3}}}](b6)(b7)
\Edge[color=green,lw=0.5pt,label={{\fontsize{8pt}{8pt}\selectfont \textcolor{green}{3}}}](b7)(b8)
\Edge[color=green,lw=0.5pt,label={{\fontsize{8pt}{8pt}\selectfont \textcolor{green}{3}}}](b18)(b14)
\Edge[color=green,lw=0.5pt,label={{\fontsize{8pt}{8pt}\selectfont \textcolor{green}{3}}}](b14)(b13)
\Edge[color=green,lw=0.5pt,label={{\fontsize{8pt}{8pt}\selectfont \textcolor{green}{3}}}](b14)(b15)
\Edge[color=green,lw=0.5pt,label={{\fontsize{8pt}{8pt}\selectfont \textcolor{green}{3}}}](b21)(b25)
\Edge[color=green,lw=0.5pt,label={{\fontsize{8pt}{8pt}\selectfont \textcolor{green}{3}}}](b25)(b24)

\Edge[color=violet,lw=0.5pt,label={{\fontsize{8pt}{8pt}\selectfont \textcolor{violet}{4}}}](c4)(c3)
\Edge[color=violet,lw=0.5pt,label={{\fontsize{8pt}{8pt}\selectfont \textcolor{violet}{4}}}](c2)(c3)
\Edge[color=violet,lw=0.5pt,label={{\fontsize{8pt}{8pt}\selectfont \textcolor{violet}{4}}}](c20)(c19)
\Edge[color=violet,lw=0.5pt,label={{\fontsize{8pt}{8pt}\selectfont \textcolor{violet}{4}}}](d13)(c19)
\Edge[color=violet,lw=0.5pt,label={{\fontsize{8pt}{8pt}\selectfont \textcolor{violet}{4}}}](b13)(c7)
\Edge[color=violet,lw=0.5pt,label={{\fontsize{8pt}{8pt}\selectfont \textcolor{violet}{4}}}](a24)(a20)
\Edge[color=violet,lw=0.5pt,label={{\fontsize{8pt}{8pt}\selectfont \textcolor{violet}{4}}}](a20)(a16)
\Edge[color=violet,lw=0.5pt,label={{\fontsize{8pt}{8pt}\selectfont \textcolor{violet}{4}}}](a20)(a19)
\Edge[color=violet,lw=0.5pt,label={{\fontsize{8pt}{8pt}\selectfont \textcolor{violet}{4}}}](a14)(a15)
\Edge[color=violet,lw=0.5pt,label={{\fontsize{8pt}{8pt}\selectfont \textcolor{violet}{4}}}](a12)(a8)
\Edge[color=violet,lw=0.5pt,label={{\fontsize{8pt}{8pt}\selectfont \textcolor{violet}{4}}}](a8)(a9)
\Edge[color=violet,lw=0.5pt,label={{\fontsize{8pt}{8pt}\selectfont \textcolor{violet}{4}}}](b21)(b22)
\Edge[color=violet,lw=0.5pt,label={{\fontsize{8pt}{8pt}\selectfont \textcolor{violet}{4}}}](b22)(b23)
\Edge[color=violet,lw=0.5pt,label={{\fontsize{8pt}{8pt}\selectfont \textcolor{violet}{4}}}](b16)(b17)
\Edge[color=violet,lw=0.5pt,label={{\fontsize{8pt}{8pt}\selectfont \textcolor{violet}{4}}}](b15)(b11)
\Edge[color=violet,lw=0.5pt,label={{\fontsize{8pt}{8pt}\selectfont \textcolor{violet}{4}}}](d21)(d22)
\Edge[color=violet,lw=0.5pt,label={{\fontsize{8pt}{8pt}\selectfont \textcolor{violet}{4}}}](d21)(d5)
\Edge[color=violet,lw=0.5pt,label={{\fontsize{8pt}{8pt}\selectfont \textcolor{violet}{4}}}](d11)(d15)
\Edge[color=violet,lw=0.5pt,label={{\fontsize{8pt}{8pt}\selectfont \textcolor{violet}{4}}}](d10)(d6)
\Edge[color=violet,lw=0.5pt,label={{\fontsize{8pt}{8pt}\selectfont \textcolor{violet}{4}}}](d7)(d6)
\Edge[color=violet,lw=0.5pt,label={{\fontsize{8pt}{8pt}\selectfont \textcolor{violet}{4}}}](v4)(v3)
\Edge[color=violet,lw=0.5pt,label={{\fontsize{8pt}{8pt}\selectfont \textcolor{violet}{4}}}](v6)(v10)
\Edge[color=violet,lw=0.5pt,label={{\fontsize{8pt}{8pt}\selectfont \textcolor{violet}{4}}}](v10)(v9)
\Edge[color=violet,lw=0.5pt,label={{\fontsize{8pt}{8pt}\selectfont \textcolor{violet}{4}}}](v18)(v14)
\Edge[color=violet,lw=0.5pt,label={{\fontsize{8pt}{8pt}\selectfont \textcolor{violet}{4}}}](v18)(v19)
\Edge[color=violet,lw=0.5pt,label={{\fontsize{8pt}{8pt}\selectfont \textcolor{violet}{4}}}](v17)(v18)

\end{tikzpicture}
    \caption{A $\chi_i'$-coloring of $S_{C_5}^3$ with $4$ colors.}
    \label{IECofS_C5^3}
\end{figure}

As an application of Theorem~\ref{thm:triangle-free} we determine the injective chromatic indices of the generalized Sierpi\'{n}ski graphs over the cycles $C_4$ and $C_5$. 
First note that $\chi_i'(C_4)=2$. It is easy to see that $\chi_i'(S^2_{C_4})\ge 3$, while from Figure~\ref{fig:SC_4} we can infer that indeed $3$ colors suffice for an injective edge coloring of $S^2_{C_4}$. In addition, the same figure presents an injective edge coloring of $S^3_{C_4}$ using $3$ colors.  Note that the coloring in Figure~\ref{fig:SC_4} is a $\chi_i'$-coloring of $S_{C_4}^3$, which satisfies the conditions in item {\it (ii)} of Theorem~\ref{thm:SG3}. We infer the following result.

\begin{corollary}
For $n\in \mathbb{N}$, we have
$$\chi_i'(S_{C_4}^n)= \begin{cases}
	           2, ~~\textrm{if}~ n=1,\\
	           3, ~~\textrm{if}~ n\ge 2.\\
	          \end{cases}$$
\end{corollary}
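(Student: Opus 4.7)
My plan is to handle $n=1$, $n=2$, and $n\ge 3$ separately, invoking Theorem~\ref{thm:triangle-free} in the last regime (valid since $C_4$ is triangle-free). The case $n=1$ is immediate: $S_{C_4}^1\cong C_4$, the pattern $(1,1,2,2)$ around the cycle is an injective edge $2$-coloring, and a single color cannot suffice because $C_4$ contains a path on three edges; hence $\chi_i'(S_{C_4}^1)=2$.

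For $n=2$, the upper bound $\chi_i'(S_{C_4}^2)\le 3$ follows by restricting the $3$-coloring of $S_{C_4}^3$ displayed in Figure~\ref{fig:SC_4} to any copy of $S_{C_4}^2$ inside it. For the lower bound, I would assume a $2$-coloring $c$ exists and analyze the neighborhood of a single bridge $p=(1,4)(4,1)$. After normalizing $c(p)=1$, walks of length three of the form $p,e,e'$ staying inside one $C_4$-copy force the two edges opposite to $p$'s endpoint in that copy to receive color $2$; the within-$C_4$ constraint $c(e_i)\neq c(e_{i+2})$ then fixes the remaining two edges of the copy to color $1$. Applying this on both sides of $p$ produces four edges of color $1$, two of which are incident to each endpoint of $p$. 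A walk $e,p,e'$ crossing the bridge, with $e,e'$ two such color-$1$ edges lying on opposite sides of $p$, imposes $c(e)\neq c(e')$, a contradiction.

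For $n\ge 3$, I would invoke Theorem~\ref{thm:triangle-free}(ii) with the bipartition $V(C_4)=A\cup B$, $A=\{1,3\}$, $B=\{2,4\}$. The hypotheses to verify on the $3$-coloring of $S_{C_4}^3$ given in Figure~\ref{fig:SC_4} are: (a) every edge incident with the extreme vertex $(1,1,1)$ or $(3,3,3)$ receives the top color $3$, and (b) every vertex incident to an edge of color $3$ lies at distance at least $2$ from each of $(2,2,2)$ and $(4,4,4)$. Both properties can be read off directly from the figure: the color-$3$ edges are concentrated near the two $A$-corners of the grid-like drawing and avoid the two $B$-corners together with their immediate neighborhoods. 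Combined with the trivial bound $\chi_i'(S_{C_4}^3)\ge\chi_i'(S_{C_4}^2)\ge 3$, the theorem then yields $\chi_i'(S_{C_4}^n)=3$ for every $n\ge 3$. The principal obstacle in this plan is the $n=2$ lower bound: one must carefully track how the constraints imposed by walks of length three propagate from a single bridge through the two surrounding $C_4$-copies; the remaining pieces are either immediate ($n=1$) or reduce to the inspection of an explicit coloring ($n\ge 3$).
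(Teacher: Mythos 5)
Your proposal is correct and follows essentially the same route as the paper: the explicit $2$-coloring of $C_4$ for $n=1$, the coloring of Figure~\ref{fig:SC_4} for the upper bounds, and Theorem~\ref{thm:triangle-free}\textit{(ii)} with the bipartition $\{1,3\}\cup\{2,4\}$ for $n\ge 3$. The only substantive difference is that you supply a full argument for the lower bound $\chi_i'(S_{C_4}^2)\ge 3$ (propagating constraints from a bridge through the two adjacent $C_4$-copies), which the paper leaves as ``easy to see''; your argument is valid.
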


Now, consider the generalized Sierpi\'{n}ski graph over $C_5$. Clearly, $\chi_i'(C_5)=3$. With some case analysis one can prove that $\chi_i(S_{C_5}^2)\ge 4$, while Figure~\ref{IECofS_C5^3} shows that $\chi_i(S_{C_5}^2)=4$. The same figure presents a $\chi_i'$-coloring of $S_{C_5}^3$ using $4$ colors. In addition, note that every vertex, which is incident with an edge with color $4$ in that coloring, is at distance at least $3$ from all extreme vertices of $S_{C_5}^3$. Therefore, by item \textit{(i)} in Theorem~\ref{thm:triangle-free}, $\chi_i'(S_{C_5}^n)=\chi_i(S_{C_5}^3)=4$. We infer the following result.  
\begin{corollary}
For $n\in \mathbb{N}$, we have
$$\chi_i'(S_{C_5}^n)= \begin{cases}
	           3, ~~\textrm{if}~ n=1,\\
	           4, ~~\textrm{if}~ n\ge 2.\\
	          \end{cases}$$
\end{corollary}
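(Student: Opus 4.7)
The plan is to handle the three ranges of $n$ separately. For $n=1$, we have $S_{C_5}^1\cong C_5$, and a straightforward check (or appeal to Observation~\ref{TheoremonIECandCN}, noting $\widetilde{C_5}\cong C_5$) gives $\chi_i'(C_5)=3$.

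For $n=2$, the upper bound $\chi_i'(S_{C_5}^2)\le 4$ will be established by exhibiting an explicit injective edge coloring using four colors; such a coloring can be read off from the restriction of the $S_{C_5}^3$ coloring of Figure~\ref{IECofS_C5^3} to any one of its five $S_{C_5}^2$-copies. For the matching lower bound $\chi_i'(S_{C_5}^2)\ge 4$, I would perform a short case analysis: fix an edge $e$ in one $C_5$-copy of $S_{C_5}^2$, attempt to three-color its two ``neighborhoods of radius two'' in the sense of sharing a common edge, and show that the cyclic propagation around the outer $C_5$ forces a conflict. Concretely, one uses the fact that within each copy of $C_5$ inside $S_{C_5}^2$ one already needs three colors, and that the edges joining adjacent copies impose additional constraints; a small forbidden subgraph analogous to the subgraph $H$ of Figure~\ref{ThesubgraphsofS3n}(a) can be exhibited in $S_{C_5}^2$ to close the argument. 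Alternatively, one can invoke Observation~\ref{TheoremonIECandCN} and verify $\chi(\widetilde{S_{C_5}^2})\ge 4$ by a short computer check, as was done for the graph $\widetilde{P}$.

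For $n\ge 3$, the lower bound $\chi_i'(S_{C_5}^n)\ge 4$ is immediate from $S_{C_5}^2\subseteq S_{C_5}^n$ and the $n=2$ case. For the upper bound, the key step is to verify that the coloring displayed in Figure~\ref{IECofS_C5^3} is indeed an injective edge coloring of $S_{C_5}^3$ with four colors, and moreover satisfies condition~{\it (i)} of Theorem~\ref{thm:triangle-free}: every vertex incident to an edge colored $4$ lies at distance at least $3$ from each of the five extreme vertices $(u,u,u)$ of $S_{C_5}^3$. Once these two properties are checked, Theorem~\ref{thm:triangle-free}{\it (i)} applies (note that $C_5$ is triangle-free, which is exactly the hypothesis we need) and yields $\chi_i'(S_{C_5}^n)=\chi_i'(S_{C_5}^3)=4$ for every $n\ge 3$.

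The main obstacle is twofold. First, the $n=2$ lower bound is the most delicate part, since it cannot be inferred from the $C_5$ base graph alone and requires either a careful manual case analysis or a computational verification on the auxiliary graph $\widetilde{S_{C_5}^2}$. Second, the coloring of $S_{C_5}^3$ must be designed so that color $4$ is pushed ``into the interior,'' i.e., confined to edges whose endpoints are far from all extreme vertices; without this extra care the only bound available is $\chi_i'(S_{C_5}^n)\le 5$ from the general upper bound of Theorem~\ref{thm:triangle-free}. Hence the real content of the corollary is producing a four-coloring of $S_{C_5}^3$ that simultaneously respects the injective-edge-coloring constraints and the distance condition in item~{\it (i)}; after that, everything else is routine.
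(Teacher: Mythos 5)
Your proposal follows essentially the same route as the paper: establish $\chi_i'(C_5)=3$, prove $\chi_i'(S_{C_5}^2)\ge 4$ by a short case analysis while reading off a $4$-coloring from Figure~\ref{IECofS_C5^3}, and then verify that the displayed $\chi_i'$-coloring of $S_{C_5}^3$ keeps every edge of color $4$ at distance at least $3$ from all extreme vertices so that Theorem~\ref{thm:triangle-free}\textit{(i)} yields $\chi_i'(S_{C_5}^n)=4$ for $n\ge 3$. This matches the paper's argument, including its correct identification that the delicate points are the $n=2$ lower bound and the distance condition on color $4$.
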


In a similar way as for the base graph $C_4$, one can find a construction when the base graph is $C_6$. See Figure~\ref{IECofS_C6^2}, which represents an injective edge coloring of $S_{C_6}^2$ with $3$ colors. Using the same coloring for each copy of $S_{C_6}^2$ in $S_{C_6}^3$ and the edges connecting the extreme vertices with color $1$ we obtain a $\chi_i'$-coloring of $S_{C_6}^3$ with $3$ colors satisfying the properties from Theorem~\ref{thm:triangle-free}{\it (ii)}. We derive that $\chi_i'(S_{C_6}^n)=3$ for all $n\in \mathbb{N}$.  

\begin{figure}[H]
    \centering
    \begin{tikzpicture}[scale=1,inner sep=1.4pt]

\def\r{0.8}
\def\R{3}

\newcommand{\hexagonsix}[3]{
  \foreach \i/\angle in {1/90,2/30,3/-30,4/-90,5/-150,6/150} {
    \node[circle,draw,inner sep=1.7pt] (#3\i) at ({#1+\r*cos(\angle)}, {#2+\r*sin(\angle)}) {};
  }
}

\hexagonsix{\R*cos(90)}{\R*sin(90)}{a}
\hexagonsix{\R*cos(30)}{\R*sin(30)}{b}
\hexagonsix{\R*cos(-30)}{\R*sin(-30)}{c}
\hexagonsix{\R*cos(-90)}{\R*sin(-90)}{d}
\hexagonsix{\R*cos(-150)}{\R*sin(-150)}{u}
\hexagonsix{\R*cos(150)}{\R*sin(150)}{v}

    \Edge[color=red,lw=0.5pt,label={{\fontsize{6pt}{8pt}\selectfont \textcolor{red}{1}}}](a1)(a2)
    \Edge[color=red,lw=0.5pt,label={{\fontsize{6pt}{8pt}\selectfont \textcolor{red}{1}}}](a1)(a6)
    \Edge[color=cyan,lw=0.5pt,label={{\fontsize{6pt}{8pt}\selectfont \textcolor{cyan}{2}}}](a6)(a5)
    \Edge[color=cyan,lw=0.5pt,label={{\fontsize{6pt}{8pt}\selectfont \textcolor{cyan}{2}}}](a5)(a4)
    \Edge[color=green,lw=0.5pt,label={{\fontsize{6pt}{8pt}\selectfont \textcolor{green}{3}}}](a3)(a4)
    \Edge[color=green,lw=0.5pt,label={{\fontsize{6pt}{8pt}\selectfont \textcolor{green}{3}}}](a3)(a2)
    \Edge[color=cyan,lw=0.5pt,label={{\fontsize{6pt}{8pt}\selectfont \textcolor{cyan}{2}}}](b1)(a2)
    \Edge[color=cyan,lw=0.5pt,label={{\fontsize{6pt}{8pt}\selectfont \textcolor{cyan}{2}}}](b1)(b2)
    \Edge[color=cyan,lw=0.5pt,label={{\fontsize{6pt}{8pt}\selectfont \textcolor{cyan}{2}}}](b1)(b6)
    \Edge[color=red,lw=0.5pt,label={{\fontsize{6pt}{8pt}\selectfont \textcolor{red}{1}}}](b6)(b5)
    \Edge[color=red,lw=0.5pt,label={{\fontsize{6pt}{8pt}\selectfont \textcolor{red}{1}}}](b5)(b4)
    \Edge[color=green,lw=0.5pt,label={{\fontsize{6pt}{8pt}\selectfont \textcolor{green}{3}}}](b3)(b4)
    \Edge[color=green,lw=0.5pt,label={{\fontsize{6pt}{8pt}\selectfont \textcolor{green}{3}}}](b3)(b2)
    \Edge[color=red,lw=0.5pt,label={{\fontsize{6pt}{8pt}\selectfont \textcolor{red}{1}}}](a1)(a2) 
    \Edge[color=green,lw=0.5pt,label={{\fontsize{6pt}{8pt}\selectfont \textcolor{green}{3}}}](b3)(c2)
    \Edge[color=red,lw=0.5pt,label={{\fontsize{6pt}{8pt}\selectfont \textcolor{red}{1}}}](c3)(c2)
    \Edge[color=cyan,lw=0.5pt,label={{\fontsize{6pt}{8pt}\selectfont \textcolor{cyan}{2}}}](c1)(c2)
    \Edge[color=red,lw=0.5pt,label={{\fontsize{6pt}{8pt}\selectfont \textcolor{red}{1}}}](c3)(c4)
    \Edge[color=green,lw=0.5pt,label={{\fontsize{6pt}{8pt}\selectfont \textcolor{green}{3}}}](c5)(c4)
    \Edge[color=green,lw=0.5pt,label={{\fontsize{6pt}{8pt}\selectfont \textcolor{green}{3}}}](c5)(c6)
    \Edge[color=cyan,lw=0.5pt,label={{\fontsize{6pt}{8pt}\selectfont \textcolor{cyan}{2}}}](c6)(c1)
    \Edge[color=cyan,lw=0.5pt,label={{\fontsize{6pt}{8pt}\selectfont \textcolor{cyan}{2}}}](c4)(d3)
    \Edge[color=cyan,lw=0.5pt,label={{\fontsize{6pt}{8pt}\selectfont \textcolor{cyan}{2}}}](d3)(d2)
    \Edge[color=cyan,lw=0.5pt,label={{\fontsize{6pt}{8pt}\selectfont \textcolor{cyan}{2}}}](d3)(d4)
    \Edge[color=green,lw=0.5pt,label={{\fontsize{6pt}{8pt}\selectfont \textcolor{green}{3}}}](d4)(d5)
    \Edge[color=green,lw=0.5pt,label={{\fontsize{6pt}{8pt}\selectfont \textcolor{green}{3}}}](d5)(d6)   
    \Edge[color=red,lw=0.5pt,label={{\fontsize{6pt}{8pt}\selectfont \textcolor{red}{1}}}](d6)(d1)
    \Edge[color=red,lw=0.5pt,label={{\fontsize{6pt}{8pt}\selectfont \textcolor{red}{1}}}](d1)(d2)
    \Edge[color=green,lw=0.5pt,label={{\fontsize{6pt}{8pt}\selectfont \textcolor{green}{3}}}](d5)(u4)  
    \Edge[color=cyan,lw=0.5pt,label={{\fontsize{6pt}{8pt}\selectfont \textcolor{cyan}{2}}}](u4)(u3)
    \Edge[color=cyan,lw=0.5pt,label={{\fontsize{6pt}{8pt}\selectfont \textcolor{cyan}{2}}}](u2)(u3)
    \Edge[color=green,lw=0.5pt,label={{\fontsize{6pt}{8pt}\selectfont \textcolor{green}{3}}}](u2)(u1)
    \Edge[color=green,lw=0.5pt,label={{\fontsize{6pt}{8pt}\selectfont \textcolor{green}{3}}}](u1)(u6) 
    \Edge[color=red,lw=0.5pt,label={{\fontsize{6pt}{8pt}\selectfont \textcolor{red}{1}}}](u4)(u5)
    \Edge[color=red,lw=0.5pt,label={{\fontsize{6pt}{8pt}\selectfont \textcolor{red}{1}}}](u5)(u6)
    \Edge[color=cyan,lw=0.5pt,label={{\fontsize{6pt}{8pt}\selectfont \textcolor{cyan}{2}}}](u6)(v5)  
    \Edge[color=cyan,lw=0.5pt,label={{\fontsize{6pt}{8pt}\selectfont \textcolor{cyan}{2}}}](v5)(v4)
    \Edge[color=cyan,lw=0.5pt,label={{\fontsize{6pt}{8pt}\selectfont \textcolor{cyan}{2}}}](v5)(v6)
    \Edge[color=green,lw=0.5pt,label={{\fontsize{6pt}{8pt}\selectfont \textcolor{green}{3}}}](v6)(v1)
    \Edge[color=green,lw=0.5pt,label={{\fontsize{6pt}{8pt}\selectfont \textcolor{green}{3}}}](v1)(v2) 
    \Edge[color=red,lw=0.5pt,label={{\fontsize{6pt}{8pt}\selectfont \textcolor{red}{1}}}](v2)(v3)
    \Edge[color=red,lw=0.5pt,label={{\fontsize{6pt}{8pt}\selectfont \textcolor{red}{1}}}](v3)(v4)   
    \Edge[color=green,lw=0.5pt,label={{\fontsize{6pt}{8pt}\selectfont \textcolor{green}{3}}}](v1)(a6) 
\end{tikzpicture}
    \caption{A $\chi_i'$-coloring of $S_{C_6}^2$ with 3 colors.}
    \label{IECofS_C6^2}
\end{figure}

We believe that for Sierpi\'{n}ski graphs over cycles and many other triangle-free graphs as base graphs, Theorem~\ref{thm:triangle-free} can be used for determining their injective chromatic indices. Nevertheless, for graphs $G$ with (many) triangles obtaining a useful general result on $\chi_i'(S_G^n)$ seems more challenging. 

\section*{Acknowledgments}
B.B. acknowledges the financial support of the Slovenian Research and Innovation Agency (research core funding No.\ P1-0297, and projects N1-0285, N1-0431).

\section*{Statements and declarations}

\medskip

{\bf Funding}

\medskip

\noindent B.B.\ acknowledges the support from the Slovenian Research and Innovation Agency (ARIS) under the grants P1-0297, N1-0285, and N1-0431. 

\bigskip

\noindent {\bf Competing interests}

\medskip

\noindent The authors declare that they have no relevant competing financial or non-financial interests to disclose.

\bigskip

\noindent {\bf Author contributions}

\medskip

\noindent All authors were involved at all stages of the preparation of this work and equally share their contribution to the paper. 

\bigskip

\noindent {\bf Data availability}

\medskip

\noindent There is no associated data to this work.

\end{document}